\UseRawInputEncoding
\documentclass{amsart}
\usepackage{amsthm,hyperref}
\usepackage{todonotes}
\usepackage{soul}

\newtheorem{theorem}{Theorem}[section]
\newtheorem{corollary}[theorem]{Corollary}
\newtheorem{lemma}[theorem]{Lemma}
\newtheorem{proposition}[theorem]{Proposition}
\theoremstyle{definition}
\newtheorem{remark}[theorem]{Remark}

\newcommand{\RR}{\mathbb{R}}

\newcommand{\CP}{\mathbb{CP}}
\newcommand{\HP}{\mathbb{HP}}

\newcommand{\V}{\mathcal{V}}

\newcommand{\g}{\mathfrak{g}}
\newcommand{\h}{\mathfrak{h}}
\newcommand{\m}{\mathfrak{m}}

\DeclareMathOperator{\ind}{ind}

\newcommand{\fol}{\mathcal{F}}
\newcommand{\SO}{\mathrm{SO}}

\newcommand{\Sym}{\operatorname{Sym}}

\newcommand{\Ad}{\operatorname{Ad}}

\title{Polar foliations on symmetric spaces and mean curvature flow}

\author[X. Liu]{Xiaobo Liu*}
\address[X. Liu]{Beijing International Center for Mathematical Research \& School of
Mathematical Sciences, Peking University, Beijing, China}
\email{xbliu@math.pku.edu.cn}
\thanks{*Research was partially supported by NSFC grants 11890662 and 11890660.}

\author[M. Radeschi]{Marco Radeschi**}
\address[M. Radeschi]{Department of Mathematics, University of Notre Dame, Notre Dame, IN, USA.}
\email{mradesch@nd.edu}
\thanks{**The author is partially supported by NSF grant 1810913}

\begin{document}

\begin{abstract}

In this paper, we study polar foliations on simply connected symmetric spaces with non-negative curvature. We will prove that all such foliations are isoparametric as defined in \cite{HeintzeLiuOlmos}. We will also prove a splitting theorem which, {}{when leaves are compact}, reduces the study of such foliations to polar foliations in compact simply connected symmetric spaces. Moreover, we will show that solutions to mean curvature flow of regular leaves in such foliations are always ancient solutions. This generalizes part of the results in \cite{TerngLiu-II} for mean curvature flows of isoparametric submanifolds in spheres.
\end{abstract}

\maketitle

\section{Introduction}

In this paper we consider polar foliations $(M,\fol)$ in a simply connected, non-negatively curved symmetric space $M$. Recall that {\it polar foliation} $\fol$ on a complete Riemannian manifold $M$ is a singular Riemannian foliation such that each point $x \in M$
is contained in a totally geodesic submanifold, called a {\it section}, which meets all leaves of $\fol$ and intersects them orthogonally. Polar foliations with flat sections are called {\it hyperpolar foliations}. Foliations given by orbits of polar actions by Lie groups are homogeneous examples of polar foliations. Other typical examples include the foliations by parallel and focal submanifolds of any isoparametric submanifold in a space form (cf. \cite{Terng-II}).  Each equifocal submanifold in a compact symmetric space gives a hyperpolar foliation with leaves the images of parallel normal vector fields under {the} exponential map (cf. \cite{TerngThorbergsson}).

The study of isoparametric submanifolds can be traced back to Cartan's work on isoparametric hypersurfaces in 1930's. Such manifolds have become an important subject in submanifold geometry and have been extensively studied since then. A nice survey article on this subject can be found in \cite{Thorbergsson}.
For a general Riemannian manifold $M$, a submanifold $L$ in $M$ is called {\it isoparametric} if {the} normal bundle $\nu L$ is flat,
$\exp (\nu_p L)$ is totally geodesic in a neighbourhood of $p$ for every $p \in L$, and locally parallel submanifolds of $L$ have parallel mean curvature vector fields (cf. \cite{HeintzeLiuOlmos}). Here {\it parallel submanifolds} of $L$ mean images of parallel normal vector fields along $L$ under {the} exponential map. When $M$  is a space form, this notion coincides with Terng's definition of isoparametric submanifolds in \cite{Terng-II}. Equifocal submanifolds $L$ in a compact symmetric space defined by Terng and Thorbergsson in \cite{TerngThorbergsson} are precisely isoparametric submanifolds with $\exp (\nu_p L)$ flat in a neighbourhood of $p$ for every $p \in L$. The definition of an isoparametric submanifold $L$ given in \cite{HeintzeLiuOlmos}  is in purely local terms. In particular, one can not expect parallel submanifolds of $L$ to give a global foliation of the ambient space in general. In case that parallel submanifolds of $L$ do give a global foliation of the ambient space, such a foliation is called an {\it isoparametric foliation}. It turns out that each regular leaf of an isoparametric foliation is always an isoparametric submanifold (cf. Corollary 2.5 in \cite{HeintzeLiuOlmos}).

Polar foliations share many similar properties as isoparametric foliations. For example, Alexandrino and Toeben have proved in \cite{AlexandrinoToeben} that for polar foliations in a complete simply connected Riemannian manifold, each regular leaf has trivial normal holonomy. This implies that {the} normal bundle of each regular leaf is flat. The existence of sections for polar foliations also implies that $\exp (\nu_p L)$ is totally geodesic for all $p$ in any regular leaf $L$.
However, unlike in the isoparametric case, there is no restriction for the mean curvature of the leaves of polar foliations.

It is an interesting question when a polar foliation is indeed isoparametric. When the ambient manifold has negative sectional curvature, then in the compact case there are no nontrivial polar or isoparametric foliations (cf. \cite{Toeben0,Lytchaknegsec}), while in the simply connected case one can easily produce examples of polar foliations that are \emph{not} isoparametric (cf. the discussion in the first page of \cite{Toeben0}).

The first main result of this paper shows that the situation is entirely different when the symmetric space has non-negative curvature:

\begin{theorem}\label{T:parallel-H}
Every polar foliation $(M,\fol)$ on a simply connected symmetric space with non-negative curvature is isoparametric.
\end{theorem}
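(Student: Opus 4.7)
To show that every regular leaf $L$ of $\fol$ is isoparametric, three conditions must be verified: that $\nu L$ is flat, that $\exp(\nu_p L)$ is totally geodesic near each $p\in L$, and that every locally parallel submanifold of $L$ has parallel mean curvature vector field. The first two are essentially free in the polar setting and are recorded in the introduction: flatness of $\nu L$ follows from the Alexandrino--Toeben trivial normal holonomy result for regular leaves in simply connected ambient spaces, while $\exp(\nu_p L)$ is totally geodesic near $p$ because it coincides there with the section of $\fol$ through $p$. Thus the substantive step is the parallel mean curvature condition, and this is where the symmetric, non-negatively curved structure of $M$ has to enter.

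My plan for the parallel mean curvature step is to combine the sliding-isometry structure of polar foliations with the explicit Jacobi-field analysis available on a symmetric space. Fix a regular leaf $L$, a point $p\in L$, and a parallel normal field $\xi$ along $L$; let $L_t=\{\exp(t\xi(q)):q\in L\}$ and $\gamma_q(t)=\exp(t\xi(q))$. The shape operator $A^t_q$ of $L_t$ at $\gamma_q(t)$ satisfies a Riccati equation driven by the Jacobi operator $R(\cdot,\dot\gamma_q)\dot\gamma_q$ along $\gamma_q$. Because $M$ is symmetric, the curvature tensor is parallel along $\gamma_q$, so after identifying nearby tangent spaces by parallel transport along $\gamma_q$ this Jacobi operator becomes a constant self-adjoint endomorphism, and $A^t_q$ can be written as an explicit algebraic function of $A^0_q$ and this operator. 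Consequently, if parallel transport along a curve $c$ in $L$ from $p$ to $q$ intertwines the initial shape operator $A^0_p$ with $A^0_q$ and also transports the ambient curvature data correctly, it will intertwine $A^t_p$ with $A^t_q$ for every $t$, and taking traces gives parallel mean curvature on each $L_t$.

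The required intertwining at $t=0$ I would extract from the polar structure itself: the sliding map along $c$ in a polar foliation on a simply connected manifold is an isometry of local sections that sends leaves to leaves, hence intertwines the slice representations and, after a short argument, the initial shape operators as well; the curvature tensor transports automatically because $M$ is symmetric. The main obstacle I anticipate is making this intertwining rigorous and verifying that it respects the de~Rham decomposition $M=\RR^k\times M_+$ with $M_+$ of compact type. The splitting theorem alluded to in the abstract requires compact leaves, which is not assumed here, so the flat Euclidean factor must be handled within the polar framework rather than split off. Non-negativity of the curvature is crucial at this last stage: it guarantees that the Jacobi operator has non-negative spectrum, so the Riccati flow exhibits bounded trigonometric behavior on the $M_+$ factor (and linear behavior on the $\RR^k$ factor), which is precisely what allows the algebraic identification of shape operators along $\gamma_q$ to propagate globally and deliver the parallel mean curvature condition on every parallel submanifold.
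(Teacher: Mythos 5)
The reduction to the parallel mean curvature condition is correct, and setting up the Riccati equation for the shape operators along horizontal geodesics, using that $R$ is parallel on a symmetric space, is the right framework. But the pivotal step of your argument --- that the sliding map along a curve $c$ in the leaf, combined with parallel transport, intertwines the initial shape operators $A^0_p$ and $A^0_q$ --- is a genuine gap, and in fact the assertion is false. The sliding map is an isometry between local sections, so it identifies $\nu_pL$ with $\nu_qL$, but it provides no identification of $T_pL$ with $T_qL$, and for inhomogeneous polar foliations there is no ambient isometry carrying $p$ to $q$ within the leaf. What you are asserting amounts to the second fundamental form being constant along the leaf up to conjugation, which is strictly stronger than the theorem and is known to fail: for isoparametric hypersurfaces in $\CP^n$ the principal curvatures genuinely vary from point to point (cf.\ Park, \emph{Isoparametric families on projective spaces}, a point the paper records explicitly in a remark), even though the mean curvature is constant. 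Only the \emph{trace} of the shape operator can be shown to be basic, and establishing exactly that, and no more, is the entire content of the theorem.

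The paper's route is different and goes through Lytchak's decomposition of $(M,\fol)$ into a trivial factor, a hyperpolar factor, and spherical polar factors. The hyperpolar case is quoted from Heintze--Liu--Olmos. The new work is for spherical sections: after normalizing the section to a round sphere, one uses the $2\pi$-periodicity of the end-point maps to show that the eigenvalues of $R(\cdot,\gamma')\gamma'$ are squares of integers, computes the total index of the family of holonomy Jacobi fields over one period by comparison with the Lagrangian family vanishing at $t=0$, and then shows that $f_p(t)=\det(d_p\phi_{tX})$ lies in an explicit finite-dimensional space of trigonometric polynomials and is determined uniquely by the singular times and multiplicities read off from $M/\fol$. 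Since those data are independent of $p$ by equifocality, $f_p'(0)=-\langle H_p,X_p\rangle$ is independent of $p$, which is precisely basicness of $H$. To salvage your approach you would need to replace the operator-level intertwining by an argument controlling only $\det(d_p\phi_{tX})$ or $\operatorname{tr}A^t_q$, which is essentially what the paper does; as written, your key intermediate claim contradicts known examples.
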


Although this will not be used in the sequel, we remark that Theorem \ref{T:parallel-H} implies that for such a foliation,
the mean curvature vector field along all regular leaves is \emph{basic} in the sense that it projects to a vector field on the manifold part of the leaf space $M/\fol$. It was proved in \cite{LytchakRadeschi} that, given a foliation with basic mean curvature vector field, there is an ``averaging operator'' projection $\operatorname{Av}: C^{\infty}(M)\to C^{\infty}(M)^\fol$ (where $C^{\infty}(M)^\fol$ denotes the algebra of smooth functions constant along the leaves of $\fol$) which commutes with Laplacian. This opens the possibility of studying polar foliations on symmetric spaces in terms of the algebra $C^\infty (M)^\fol$, together with the action of the Laplacian, as was done in \cite{MR, MR-II} for singular Riemannian foliations on spheres.

Splitting theorems play an import role in the classification of isoparametric and equifocal submanifolds (cf. \cite{Terng-II}, \cite{HeintzeLiu}, and \cite{Ewert}). These theorems assert that such submanifolds decompose into products of lower dimensional submanifolds if their associated Coxeter groups decompose. 
In \cite{Lytchak}, Lytchak proved that every polar foliation $(M,\fol)$ on a simply connected symmetric space with non-negative curvature
splits as product of hyperpolar foliations, polar foliations with spherical sections, and trivial foliations. Here a trivial foliation means the foliation given by fibers of the projection from a product of two manifolds to one of its components. In this paper, we will prove
a splitting theorem of another type.
 
\begin{theorem}\label{T:splitting}
Let $(M,\fol)$ be a polar foliation {with compact leaves} on a simply connected symmetric space with non-negative curvature. Then the foliation splits as the product of a polar foliation on the compact factor of $M$, and an isoparametric foliation on the Euclidean factor.
\end{theorem}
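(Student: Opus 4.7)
The plan is to combine Theorem~\ref{T:parallel-H} with Lytchak's splitting theorem (cited above) and then refine the resulting decomposition along the de Rham decomposition of the ambient space using the compactness of leaves. Write $M = \RR^k \times M_+$ for the de Rham decomposition, with $\RR^k$ the Euclidean factor and $M_+$ the compact simply connected symmetric factor. By Theorem~\ref{T:parallel-H} the foliation is isoparametric. By Lytchak's theorem we may write $(M,\fol) = \prod_i (M_i, \fol_i)$ with each $\fol_i$ either hyperpolar, polar with spherical sections, or trivial. Because the de Rham decomposition is unique up to reordering of irreducible factors, each $M_i$ splits further as $M_i = \RR^{k_i} \times M_{i,+}$ with $\sum_i k_i = k$ and $\prod_i M_{i,+} = M_+$, and the leaves of each $\fol_i$ inherit compactness from those of $\fol$.

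The next step is to show that every Lytchak factor $\fol_i$ splits as a product along this de Rham decomposition of $M_i$. For a spherical-section factor, the section has constant positive sectional curvature and is totally geodesic in $M_i$; by the product-curvature formula any $2$-plane containing a $\RR^{k_i}$-direction would be flat, so the tangent space of the section has trivial intersection with $\RR^{k_i}$. Hence $\fol_i$ is pulled back from a foliation on $M_{i,+}$, and compactness of the leaves (which would otherwise contain full $\RR^{k_i}$-fibers) forces $k_i=0$. For a trivial factor the splitting is immediate from the definition.

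The main obstacle is the hyperpolar case: $\fol_i$ hyperpolar on a mixed ambient $M_i = \RR^{k_i} \times M_{i,+}$ with compact leaves. Here the flat section $\Sigma_i$ could a priori be \emph{tilted} across the product, since flat totally geodesic submanifolds of a product need not themselves be products (e.g.\ a helix in $\RR \times S^1$). The rigidity step is to show that compactness of leaves forbids such tilting. I would use the Weyl (Coxeter) group associated to the isoparametric foliation $\fol_i$, whose reflection hyperplanes in $\Sigma_i$ are determined by the singular leaves; combined with the classical fact that any compact isoparametric submanifold of Euclidean space lies on a sphere, this should force a splitting $\Sigma_i = \Sigma_{i,0} \times \Sigma_{i,+}$ with $\Sigma_{i,0}$ an affine subspace of $\RR^{k_i}$ and $\Sigma_{i,+}$ a flat torus in $M_{i,+}$, and then $\fol_i$ itself will be the corresponding product of an isoparametric foliation on $\RR^{k_i}$ and a hyperpolar foliation on $M_{i,+}$. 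I expect this rigidity step to be the most involved, and to rely essentially on Theorem~\ref{T:parallel-H} (via the Coxeter-group structure it provides) together with the Euclidean splitting theorems for compact isoparametric submanifolds.

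Finally, assembling all Euclidean pieces of the refined decomposition yields an isoparametric foliation on $\RR^k$ (a product of isoparametric foliations with compact leaves on each $\RR^{k_i}$), and all compact pieces yield a polar foliation on $M_+$; the original $\fol$ is then their product on $M = \RR^k \times M_+$, as claimed.
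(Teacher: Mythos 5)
Your high-level plan — Lytchak's decomposition into trivial, hyperpolar, and spherical factors, then handling each factor separately — matches the paper's proof. But there are two genuine gaps, one small and one large.

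\textbf{Small gap (spherical factors).} You argue that since any $2$-plane containing an $\RR^{k_i}$-direction is flat, the tangent space of the section $\Sigma_i$ has \emph{trivial intersection} with $T\RR^{k_i}$, and then conclude that $\fol_i$ is ``pulled back from a foliation on $M_{i,+}$.'' Trivial intersection does not rule out tilting: a vector $x = x_e + x_+$ with both components nonzero is not excluded by your curvature observation. What you actually need is $T_p\Sigma_i \subseteq T_pM_{i,+}$, i.e.\ no tilt at all. This is true, but the correct reason is different: if $x_e\neq 0$ then the $\Sigma_i$-geodesic $\exp_p(tx)$ projects to an unbounded line in $\RR^{k_i}$, contradicting compactness of the round section. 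The paper takes a different route entirely (Proposition~\ref{P:type3cpt}, via dual leaves of Lytchak), and in fact proves that a type-3 factor $M_i$ is compact \emph{without} assuming compact leaves.

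\textbf{Large gap (hyperpolar factors).} This is the heart of the theorem, and your proposal only sketches an approach (``I would use the Weyl group\ldots combined with the classical fact that any compact isoparametric submanifold of Euclidean space lies on a sphere, this should force a splitting\ldots I expect this rigidity step to be the most involved''). There are two problems. First, the ``classical fact'' you cite is about isoparametric submanifolds \emph{of Euclidean space}; here the ambient is the mixed product $\RR^{k_i}\times M_{i,+}$, and there is no obvious way to project the leaves to $\RR^{k_i}$ so as to land in a situation where that fact applies — doing so essentially \emph{is} the splitting you want to prove. Second, the Coxeter/Weyl group structure by itself does not know about the de Rham decomposition of the ambient space, so you need an additional geometric invariant on the section that ties the Weyl group to the product structure of $M_i$. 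The paper introduces exactly such an invariant: the distribution $\mathcal{D}\subseteq T\Sigma$ defined as the kernel of $x\mapsto R(\cdot,x)x$, shows it is parallel and tangent to the Euclidean de Rham factor (Lemma~\ref{L:properties-of-D}), characterizes $\mathcal{D}$ via finiteness of the index of the holonomy Jacobi family (Lemma~\ref{L:condition}), proves that the system of walls — hence the Weyl group — splits orthogonally relative to $\mathcal{D}$ and $\mathcal{D}^\perp$ (Lemma~\ref{L:Weyl-splits}), and only then invokes the Ewert/Silva--Speran\c{c}a splitting (Proposition~\ref{P:splittingw}) to split the foliation itself. Compactness of leaves then forces the ``positive-trace'' factor to be compact (Corollary~\ref{C:splitting}). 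Without identifying the distribution $\mathcal{D}$ or some substitute for it, your rigidity step remains unproved, and this is where all of the content of the theorem resides.
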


Theorem \ref{T:splitting} is the special case of a more general, yet slightly wordier, result (cf. {}{Theorem {\ref{T:total-splitting}}}).
Isoparametric foliations on Euclidean spaces have been completely classified (see, for example, survey articles \cite{Thorbergsson} and \cite{Chi}). Hence Theorem \ref{T:splitting} reduces the study of corresponding polar foliation to those in compact simply connected symmetric spaces. Note that canonical metrics on compact simply connected symmetric spaces have non-negative sectional curvature.

The mean curvature flow (abbreviated as {\it MCF}) of a submanifold $L$ in a Riemannian manifold $M$ is a map
$ f: I \times L \longrightarrow M $ satisfying
$$\frac{\partial f}{\partial t} = H(t, \cdot),$$
where $I$ is an interval and $H(t, \cdot)$ is the mean curvature vector field  of $L_t:=f(t, \cdot)$.
It was proved in \cite{TerngLiu} that the solution to MCF for any compact isoparametric submanifold in a Eucliean space or in a sphere
always exists over a finite interval [0, T) with each $L_t$ an isoparametric submanifold for $t \in [0, T)$ and it converges to a focal submanifold as $t$ goes to $T$. This result was generalized to MCF flow for equifocal submanifolds in \cite{Koike} and
MCF for regular leaves of an isoparametric foliation on a compact non-negatively curved space in \cite{AlexandrinoRadeschi-II}. 
It was also proved in \cite{AlexandrinoRadeschi-II} that such mean curvature flows always have type I singularity.
An immediate consequence of Theorem~\ref{T:parallel-H}, {}{Theorem~{\ref{T:total-splitting}}}, and results in \cite{AlexandrinoRadeschi-II}  is that the same result holds for MCF of regular leaves of any polar foliation on a simply connected symmetric space with non-negative curvature.

If a solution to MCF exists for all $t \in (-\infty, T)$ for some $T \geq 0$, then it is called an
{\it ancient solution}. Ancient solutions to MCF have been extensively studied in recent years since they are important in studying singularities of general MCF. So far most results about ancient solutions are for MCF in Euclidean spaces and spheres. We refer to
the reference in \cite{TerngLiu-II} for some of these results.
In \cite{TerngLiu-II}, it was proved that MCF for isoparametric submanifolds in Euclidean spaces and spheres always have ancient solutions. Moreover, in each isoparametric foliation on a sphere, there is a unique minimal regular leaf and MCF of any other regular leaves always converge to the unique minimal regular leave as $t$ goes to $-\infty$. Another main result of this paper is that MCF of regular leaves of any polar foliation on a simply connected symmetric space with non-negative curvature always have ancient solutions.

The main result, which applies in greater generality than symmetric space, is the following:

\begin{theorem}\label{T:MCF}
Let $(M,\fol)$ be an isoparametric foliation on a simply connected Riemannian manifold, with $M/\fol$ compact.

If $Ric_M(x)>Ric_\Sigma(x)$ for every regular point $p$ and every vector $x\in T_pM$ tangent to the section $\Sigma$, then there exists a unique minimal regular leaf $L_{\rm min}$. Furthermore, for any regular leaf $L$ in $\fol$ the solution of MCF $L_t$ with initial data $L_0=L$ is always an ancient solution converging to $L_{\rm min}$ as $t$ goes to $-\infty$.
\end{theorem}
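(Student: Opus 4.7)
The plan is to reduce the mean curvature flow of isoparametric leaves to a gradient-type ODE on the section, and to use the Ricci hypothesis to show strict concavity of the relevant potential. Fix a section $\Sigma$ and a closed fundamental domain $C\subset\Sigma$ for the (generalized) Weyl group of $(M,\fol)$; the compactness of $M/\fol$ makes $C$ compact, and its interior $C^{\circ}$ parametrizes the regular leaves via $q\mapsto L_q$. Because $\fol$ is isoparametric, the mean curvature vector $H$ of every regular leaf is parallel in the normal bundle and tangent to any section through the leaf, and the MCF preserves $\fol$; thus the MCF $L_t$ may be identified with a curve $\gamma(t) \in C^{\circ}$ obeying $\gamma'(t) = H(\gamma(t))$, where $H$ is viewed as a vector field on $C^{\circ}$.

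To express this ODE as a gradient flow, set $V(q) := \vol(L_q)$. A first variation, using the isoparametric property (so $\langle H, \xi_L\rangle$ is constant along $L_q$ for the parallel normal extension $\xi_L$ of a vector $\xi \in \nu_q L_q = T_q\Sigma$), yields $H(q) = -\nabla_\Sigma \log V(q)$; MCF is therefore the negative gradient flow of $\log V$ on $C^{\circ}$ with its induced metric. The key input is the Hessian. Along a unit-speed geodesic $\gamma(s)$ in $\Sigma$ with parallel $\xi(s) = \gamma'(s)$, the shape operator $A(s) = A_{\xi(s)}^{L_{\gamma(s)}}$ of the parallel family of leaves satisfies the Riccati equation $A'(s) = A(s)^2 + R_{\xi(s)}$. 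Taking the trace over $T_{\gamma(s)}L_{\gamma(s)}$, and using $\nu L = T\Sigma$ along $L$ together with $\Sigma$ totally geodesic (so $\operatorname{tr}_{T\Sigma} R_\xi = \operatorname{Ric}_\Sigma(\xi)$), one obtains
\[
(\log V)''(s) = -\|A(s)\|^2 - [\operatorname{Ric}_M(\xi) - \operatorname{Ric}_\Sigma(\xi)] < 0
\]
by hypothesis. Hence $\log V$ is strictly geodesically concave on $C^{\circ}$.

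As $q$ approaches a singular leaf on $\partial C$, the leaf $L_q$ collapses (via the tube-like structure near focal strata used in \cite{AlexandrinoRadeschi-II}), so $V\to 0$ and $\log V \to -\infty$ on $\partial C$. Combined with strict concavity and compactness of $C$, this forces a unique maximum $q_*\in C^{\circ}$, the unique critical point of $\log V$; the leaf $L_{\min} := L_{q_*}$ is then the unique minimal regular leaf. For the MCF, the backward trajectory $\gamma(t)$ ($t\leq 0$) is the forward gradient flow of $\log V$, along which $\log V$ is non-decreasing; therefore $\gamma(t)$ remains in the compact set $\{\log V\geq \log V(\gamma(0))\}\subset C^{\circ}$. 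This gives long-time backward existence, i.e.\ an ancient solution, and standard gradient-flow arguments on a compact set with unique critical point yield $\gamma(t)\to q_*$ as $t\to -\infty$, hence $L_t\to L_{\min}$.

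The main obstacle is the Hessian computation. One must carefully verify, in arbitrary codimension, that the shape operator $A_\xi^{L_s}$ (with $\xi$ parallel along a geodesic of $\Sigma$) is indeed the object whose trace yields $\langle H,\xi\rangle$ and that satisfies the Riccati equation, and then properly apply the Gauss equation using $\nu L = T\Sigma$. A secondary delicate point is the boundary blow-down $V\to 0$ on $\partial C$, needed both for uniqueness of the maximum and for confinement of the backward flow to a precompact region of $C^{\circ}$.
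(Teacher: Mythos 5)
Your argument follows the same conceptual path as the paper's: express the MCF as an ODE on the quotient driven by the basic mean curvature, exhibit a strictly concave volume-type Lyapunov functional on the interior of $M/\fol$, observe it blows down at the boundary, and conclude confinement plus backward-time convergence to the unique maximum. Where the technical realization differs, your version is, if anything, slightly cleaner. You work with $\log V$ rather than the paper's $V^{1/n}$, and compute $(\log V)'' = -\|A\|^2-\bigl(\operatorname{Ric}_M(\xi)-\operatorname{Ric}_\Sigma(\xi)\bigr)<0$ directly from the trace of the Riccati equation. This computation is legitimate at the level of the integral $V(\gamma_*(s))=\int_{L_0}f(p,s)\,\omega_0$ precisely because $\operatorname{tr}(A_s)$ is constant on each leaf (isoparametricity), so $\partial_s\log f(p,s)$ is $p$-independent and hence $(\log V)''(s)=\operatorname{tr}(A_s')$ pointwise; one should spell this out, since a sum of log-concave functions is not automatically log-concave. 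By contrast, the paper runs a pointwise trigonometric Riccati comparison and deduces concavity of $V^{1/n}$, a quantitatively stronger statement, before applying it. Your confinement argument --- the backward flow increases $\log V$, so it stays in the compact superlevel set $\{\log V\ge c\}\subset C^\circ$ --- also replaces, in a self-contained way, the paper's appeal to Proposition 3.3 of \cite{AlexandrinoRadeschi-II} for escape from tubular neighborhoods of singular strata.

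The one genuine omission is that you implicitly assume the regular leaves have finite $n$-volume, so that $V(q)=\operatorname{vol}(L_q)$ (hence $\log V$) is well defined. The hypotheses of the theorem --- isoparametric, $M$ simply connected, $M/\fol$ compact --- do not force the leaves to be compact, and the paper explicitly handles the noncompact case by replacing $(M,\fol)$ with $(U_P,\fol_P)$, the restriction of the foliation to the union $U_P$ of Weyl chambers through a relatively compact open piece $P$ of a fixed regular leaf (Remark \ref{R:noncompact-case}), and running the whole concavity--Lyapunov argument with the local volume $V_P$. Since the mean curvature of $L'\cap U_P$ agrees with that of $L'$, and minimality is equivalent on $L'$ and $L'\cap U_P$, this reduction is harmless; you should incorporate it. A secondary point: the boundary blow-down $V\to0$ is asserted but does not follow from concavity alone (a concave function on an open interval with zero boundary values need not be continuous up to the boundary); it requires the Hausdorff convergence of $L_q$ to the lower-dimensional boundary leaf, which both you and the paper ultimately rely on (via \cite{AlexandrinoRadeschi-II}), so this is not a gap relative to the paper, but it deserves an explicit citation rather than a parenthetical.
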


As a corollary we get:
\begin{theorem}\label{T:ancient}
Let $(M,\fol)$ be a polar foliation on a compact simply connected, non-negatively curved symmetric space, with $M/\fol$ compact. Then for any regular leaf $L$ in $\fol$, the solution of MCF $L_t$ with initial data $L_0=L$ is always an ancient solution and it converges to a minimal leaf as $t\to -\infty$. Furthermore, if $(M,\fol)$ does not split a trivial factor $(M',\{pts.\})$ then the minimal leaf is unique.
\end{theorem}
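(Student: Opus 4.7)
The plan is to derive Theorem~\ref{T:ancient} from Theorem~\ref{T:MCF} via Theorems~\ref{T:parallel-H} and~\ref{T:total-splitting}. By Theorem~\ref{T:parallel-H}, $\fol$ is isoparametric, so the framework of Theorem~\ref{T:MCF} applies. Using Theorem~\ref{T:total-splitting} I would decompose $(M,\fol) = (M_0,\fol_0)\times(M',\{\text{pts.}\})$, where $(M_0,\fol_0)$ carries no trivial factor (and inherits the hypotheses: $M_0$ is a compact simply connected non-negatively curved symmetric space with $M_0/\fol_0$ compact). On the trivial factor every leaf is a single point, so MCF is stationary there and all dynamics take place on $(M_0,\fol_0)$.

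To apply Theorem~\ref{T:MCF} to $(M_0,\fol_0)$ I need to verify its Ricci hypothesis. Since each section $\Sigma_0$ is totally geodesic in $M_0$, the Gauss equation together with an orthonormal basis $\{g_j\}$ of the leaf-tangent space $T_pL_0 = (T_p\Sigma_0)^\perp$ gives
\[
\operatorname{Ric}_{M_0}(x)-\operatorname{Ric}_{\Sigma_0}(x) \;=\; \sum_j K_{M_0}(x,g_j)
\]
for every $x\in T_p\Sigma_0$. Non-negative sectional curvature makes this sum $\geq 0$, and the crux is strict positivity. Writing $M_0 = G/K$ with $\mathfrak{g}=\mathfrak{k}\oplus\mathfrak{m}$, the standard identity $K(X,Y)=|[X,Y]|^2/(|X|^2|Y|^2-\langle X,Y\rangle^2)$ on a compact-type symmetric space shows that the equality case would force $[x,g]=0$ in $\mathfrak{g}$ for every leaf direction $g\in T_pL_0$. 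I would argue that such an $x$ generates a parallel direction in $M_0$ that is tangent to every section and commutes with every leaf direction, hence produces via Theorem~\ref{T:total-splitting} a nontrivial trivial factor $(M'',\{\text{pts.}\})$ in $\fol_0$, contradicting our reduction.

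With the Ricci inequality verified, Theorem~\ref{T:MCF} implies that the MCF of any regular leaf of $\fol_0$ is ancient and converges to a unique minimal leaf $L_{0,\min}$ as $t\to-\infty$. Combining with the stationary MCF on $M'$, the MCF of any regular leaf $L_0\times\{q\}$ of $\fol$ is ancient and converges to $L_{0,\min}\times\{q\}$, which is a minimal leaf of $\fol$. Uniqueness of the minimal leaf reduces to $M'$ being a single point, equivalently to $(M,\fol)$ having no trivial factor, matching the statement. The main obstacle is the Lie-algebraic step: showing that the vanishing of $[x,g]$ for every leaf direction $g$ forces a genuine trivial factor in $\fol_0$. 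I would approach this by combining the de Rham decomposition of $M_0$ with the equivariance of the polar structure under the transvection group, and then aligning the resulting metric splitting with the foliation splitting of Theorem~\ref{T:total-splitting}.
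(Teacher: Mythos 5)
Your high-level strategy — reduce via a splitting, verify the strict Ricci inequality, apply Theorem~\ref{T:MCF}, and cite Proposition~\ref{P:minl-leaves} for uniqueness — is indeed the paper's route, but there is a genuine gap at exactly the point you flag, and your proposed way of closing it does not quite work.

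First, a smaller point: Theorem~\ref{T:total-splitting} does not split off the compact trivial factor. For $M$ compact the conclusion of that theorem is simply $(M,\fol)=(M_c,\fol_c)$. To isolate the trivial factor $(M',\{\mathrm{pts.}\})$ you need Lytchak's decomposition, Theorem~\ref{T:Decomposition}, which places all such factors in $(M_{-1},\fol_{-1})$.

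The substantive issue is the verification of $\operatorname{Ric}_M(x)>\operatorname{Ric}_\Sigma(x)$, i.e.\ $\operatorname{tr}|_{\V_p}R(\cdot,x)x>0$. Your Lie-algebraic argument (equality forces $[x,g]=0$ for all leaf directions $g$, hence a trivial factor) is precisely what Lemma~\ref{L:properties-of-D} and Proposition~\ref{P:real-splitting} accomplish for the \emph{hyperpolar} factors — but that argument needs the section to be flat. From $R(v,x)x=0$ for $v\in\V_p$ alone you do not get that $x$ is central in $\g$: you only control $[x,\V_p]$, not $[x,T_p\Sigma]$. In the hyperpolar case $T_p\Sigma$ is flat, so $R(y,x)x=0$ there as well, giving $[x,\m]=0$ and then $[x,\h]=0$; with a spherical section $\Sigma$ has $K\equiv 1$, so $[x,T_p\Sigma]\neq 0$, the Lie-algebraic chain breaks, and no de Rham or transvection argument obviously produces a trivial factor. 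The paper handles the spherical factors $(M_i,\fol_i)$ by a completely different mechanism: it rescales so the section is a round sphere of curvature $1$, shows all horizontal geodesics are closed of period $2\pi$ with periodic holonomy Jacobi fields (Lemma~\ref{L:integers}), identifies $\ind_{[0,2\pi)}\Lambda_h$ with $2\sum_i\lambda_i$ (Lemma~\ref{L:sum}), and then invokes Lytchak's result that an indecomposable spherical polar foliation has singular leaves, which forces the index — hence $\operatorname{tr}|_\V R$ — to be strictly positive (Corollary~\ref{L:cond-satisf}). So the step you rightly identify as "the main obstacle" is closed in the paper not by a Lie-algebraic splitting argument but by a global Jacobi-field periodicity argument; your current plan would need to be replaced for the type~3 factors.
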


This theorem will give many examples of ancient solutions of MCF in compact symmetric spaces. The proof of Theorem~\ref{T:MCF} is based on estimates of Jacobi fields using comparison theorem for solutions to the Riccati equation. This is completely different from the approach in \cite{TerngLiu-II} which relies on structure of Coxeter groups associated to isoparametric submanifolds and representations of mean curvature vectors in terms of curvature normals.

By Theorem \ref{T:total-splitting}, and the results in \cite{TerngLiu} and \cite{TerngLiu-II} one has a complete picture of the mean curvature flow with regular leaves as initial data in a polar foliation on complete simply connected symmetric spaces with non-negative curvature:

\begin{corollary}
Let $(M,\fol)$ be a polar foliation on a complete simply connected symmetric space with non-negative curvature. Then the solutions of the mean curvature flow starting at regular leaves of $\fol$ are ancient.
\end{corollary}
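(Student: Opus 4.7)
The plan is to invoke the splitting theorem to reduce to two pieces on which MCF has already been analyzed, and then glue by a product argument. Since $M$ is a simply connected symmetric space of non-negative curvature, its de Rham decomposition takes the form $M=M_0\times M_1$, where $M_0=\RR^k$ is the Euclidean factor and $M_1$ is a compact simply connected symmetric space of non-negative curvature (irreducible factors of non-compact type are excluded, since they cannot carry non-negative sectional curvature). By Theorem \ref{T:parallel-H} the foliation $\fol$ is isoparametric, and by Theorem \ref{T:total-splitting} it splits as $\fol=\fol_0\times\fol_1$, with $\fol_0$ an isoparametric foliation on $M_0$ and $\fol_1$ a polar foliation on $M_1$; any further trivial factors $(M',\{\text{pts.}\})$ contribute only stationary MCF pieces and can be absorbed harmlessly.

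The next step is to decouple the MCF across this product. For a product submanifold $L=L_0\times L_1\subset M_0\times M_1$ the second fundamental form splits as a block sum whose blocks are valued in the normal bundles of the respective factors, so the mean curvature vector satisfies $H=H_0+H_1$ with $H_i$ tangent to $M_i$. Consequently the MCF equation $\partial_t f=H$ decouples into two independent MCF equations on $M_0$ and $M_1$ with initial data $L_0$ and $L_1$, and the MCF starting at $L$ is ancient on precisely the intersection of the intervals on which the two factor flows are ancient.

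For the compact factor, the quotient $M_1/\fol_1$ is automatically compact (as the continuous image of the compact space $M_1$), so Theorem \ref{T:ancient} applies and yields an ancient MCF for $L_1$ converging to a minimal leaf as $t\to-\infty$. For the Euclidean factor, $L_0$ is a regular leaf of an isoparametric foliation on $\RR^k$, hence a Terng-isoparametric submanifold, and the results of \cite{TerngLiu} and \cite{TerngLiu-II} provide an ancient MCF starting at $L_0$. Combining the two factor flows gives an ancient MCF starting at $L$, as claimed.

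The main point to verify carefully is that Theorem \ref{T:total-splitting} provides a splitting compatible enough with the Riemannian product structure for the MCF decoupling to go through, namely that each $M_i$ is a totally geodesic de Rham factor and that $\fol_i$ depends only on the $M_i$-coordinate. Once this is in place, the combination with Theorem \ref{T:ancient} and the Euclidean results of \cite{TerngLiu,TerngLiu-II} is essentially automatic; no new analytic input is required beyond what has already been assembled.
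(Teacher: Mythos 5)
Your proposal matches the paper's approach in outline: the paper also derives the corollary from Theorem \ref{T:total-splitting}, the product decoupling of the mean curvature flow, Theorem \ref{T:ancient} for the factor with compact quotient, and the ancient solutions of \cite{TerngLiu,TerngLiu-II} for the Euclidean isoparametric factor. So the key moves are the right ones.

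There is one imprecision worth flagging. You invoke Theorem \ref{T:total-splitting} as if it produced a splitting aligned with the de Rham decomposition $M=\RR^k\times M_1$ with $M_1$ compact, and then justify compactness of $M_1/\fol_1$ by saying it is the continuous image of the compact $M_1$. But Theorem \ref{T:total-splitting} gives $(M,\fol)=(M_c,\fol_c)\times(M_e,\fol_e)$ where only the \emph{quotient} $M_c/\fol_c$ is asserted to be compact, not $M_c$ itself; indeed $M_c$ absorbs the factor $M_{-1}^a$ carrying the ``one leaf only'' trivial foliation, which may well have a Euclidean component (e.g. $M=\RR\times S^3$ with leaves $\RR\times T^2_a$ gives $M_c=\RR\times S^3$, non-compact). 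Because of this, Theorem \ref{T:ancient} as stated (which requires $M$ compact) cannot be applied to $(M_c,\fol_c)$ verbatim. The fix is easy and is implicit in your remark about trivial factors, but you should make it explicit and complete: the Euclidean part of $M_c$ carries the one-leaf trivial foliation, so after splitting it off, its MCF is stationary, and what remains is a polar foliation on a compact symmetric space, to which Theorem \ref{T:ancient} applies. Alternatively, and more cleanly, apply Theorem \ref{T:MCF} directly to $(M_c,\fol_c)$: its hypothesis only needs $M_c/\fol_c$ compact (which is exactly what Theorem \ref{T:total-splitting} provides) together with the Ricci comparison on sections, which follows as in the proof of Theorem \ref{T:ancient}; this avoids any dependence on compactness of $M_c$. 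Also note that your phrase ``trivial factors $(M',\{\text{pts.}\})$'' covers only the point-foliation trivial factor; the one-leaf trivial factor also occurs and is equally MCF-stationary (zero mean curvature), so both should be mentioned. With these adjustments the argument is complete.
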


This paper is organized in the following way: In Section~\ref{sec:Pre}, we collect some known results about polar foliations and holonomy Jacobi fields which will be needed in the proof of above theorems. In Section~\ref{sec:hyper}, we prove a splitting result for hyperpolar foliations, i.e. Proposition~\ref{P:real-splitting}, which is the essential part of Theorem~\ref{T:splitting}. In Section~\ref{sec:spherical}, we study polar foliations with spherical sections and complete the proof of Theorems~\ref{T:parallel-H} and \ref{T:splitting}. Finally, we prove Theorems~\ref{T:MCF} and \ref{T:ancient} in Section~\ref{sec:MCF}.

\section{Preliminaries}
\label{sec:Pre}

\subsection{Decomposition theorem}

We will use in a fundamental way the following {}{decomposition theorem for} polar foliations by Lytchak (\cite{Lytchak}, Theorem 1.2):

\begin{theorem}[Decomposition theorem]\label{T:Decomposition}

Let $(M,\fol)$ be a polar foliation on a simply connected non-negatively curved symmetric space $M$. Then we have a splitting
\[
(M,\fol) = (M_{-1},\fol_{-1})\times (M_0,\fol_0) \times \prod_i (M_i,\fol_i)
\]
where:
\begin{enumerate}
\item $(M_{-1},\fol_{-1})$ is given by the fibers of the projection of $M_{-1}$ onto a direct factor.
\item $(M_0,\fol_0)$ is hyperpolar.
\item $(M_i,\fol_i)$ are polar foliations, whose section has constant positive sectional curvature (these were called \emph{spherical polar} in \cite{GroveZiller}).
\end{enumerate}
\end{theorem}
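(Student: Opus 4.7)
The plan would be to build the splitting from a natural splitting of the sections. Fix a regular point $p\in M$, and let $\Sigma$ be the section through $p$. Since $\Sigma$ is totally geodesic in the non-negatively curved symmetric space $M$, it is itself a non-negatively curved (locally) symmetric space. Lifting to the universal cover, the de Rham decomposition theorem gives a splitting
\[
\widetilde{\Sigma} \;=\; \Sigma^{\rm flat} \times \Sigma_1 \times \cdots \times \Sigma_k,
\]
where the $\Sigma_i$ are irreducible compact symmetric spaces. The generalized Weyl group $W=W(\fol,\Sigma)$ is generated by reflections in the walls, i.e.\ the intersections of $\Sigma$ with the codimension-one singular strata of $\fol$; each wall is a totally geodesic hypersurface of $\Sigma$.

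The crucial classical fact is that a compact irreducible symmetric space of dimension at least two admits a totally geodesic hypersurface if and only if it is a round sphere; this follows from the classification of totally geodesic submanifolds. Therefore each non-flat irreducible factor $\Sigma_i$ that actually supports a wall of $W$ must have constant positive curvature, while those factors that carry no walls can be collected into a trivial factor $\Sigma_{-1}$ on which $W$ acts trivially. This refines the section splitting into
\[
\widetilde{\Sigma} \;=\; \Sigma_{-1} \times \Sigma^{\rm flat} \times \Sigma_1^{\rm sph} \times \cdots \times \Sigma_k^{\rm sph},
\]
and correspondingly $W$ decomposes as a product of reflection groups acting on each non-trivial factor.

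The next step is to promote this decomposition of $\Sigma$ to a global splitting of $(M,\fol)$. Because $M$ is simply connected, the result of Alexandrino--Toeben gives trivial normal holonomy along a regular leaf $L$, so the orthogonal decomposition of $T_p\Sigma=\nu_pL$ corresponding to the factors above extends by parallel transport to a globally defined parallel decomposition of the horizontal distribution. Combined with the tangential distribution to $L$ (which similarly splits, by intertwining with the action of the Weyl group on the factors), one obtains a global parallel decomposition of $TM$. Since $M$ is simply connected and symmetric, the de Rham theorem integrates this to a product $M=M_{-1}\times M_0 \times \prod_i M_i$, and the factor foliations $\fol_j$ are recovered as the restrictions of $\fol$ to each factor. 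The piece coming from $\Sigma_{-1}$ has trivial Weyl group and no singular leaves, so it is case (1); $\Sigma^{\rm flat}$ produces a hyperpolar factor (case (2)); and each $\Sigma_i^{\rm sph}$ yields a spherical-polar factor (case (3)).

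The main obstacle is the passage from the infinitesimal decomposition of $\Sigma$ (and the tangent space at $p$) to a genuine metric product decomposition of $M$ compatible with the foliation. One has to check that the parallel distributions, defined using normal holonomy along a single regular leaf, are preserved under the whole geometry of $M$ (including the action of the isometries defining the symmetric structure) and that the Weyl-group product structure guarantees the leaves themselves split as products. This combines trivial normal holonomy, the rigidity of the symmetric structure, and the classification of totally geodesic hypersurfaces noted above.
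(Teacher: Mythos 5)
Note first that the paper does not prove this theorem: it is cited verbatim as Theorem~1.2 of Lytchak's paper \cite{Lytchak}, and the present paper relies on it as a black box. So there is no internal proof to compare against; your proposal must be judged on its own.

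Your outline captures part of the spirit of the result (the de~Rham decomposition of the section, and the fact that an irreducible compact symmetric space of dimension $\geq 2$ admitting a totally geodesic hypersurface must be a round sphere, so that any de~Rham factor of $\Sigma$ carrying a wall is either flat or of constant positive curvature). But the proof is missing precisely at the step you flag as ``the main obstacle,'' and that step is not a technicality --- it is the theorem. Trivial normal holonomy of a regular leaf gives a $\nabla^\perp$-parallel decomposition of the normal bundle $\nu L$, but this is far from a parallel decomposition of $TM$: you must control the Levi--Civita parallel transport of $M$ along arbitrary loops (not just leafwise parallel transport), show that the tangential curvature distributions of $L$ split compatibly with the Weyl-group decomposition, and then verify that the leaves themselves are metric products of leaves in the factors. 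None of these is automatic. Indeed the present paper illustrates exactly how delicate this is: Proposition~\ref{P:splittingw} is the statement, in the hyperpolar setting only, that a Weyl-group splitting of the section forces a splitting of the foliation, and even that special case requires genuine machinery (Ewert's lift to the Hilbert space $H^0([0,1],\mathfrak{g})$, or Silva--Speran\c{c}a's dual-foliation completeness argument). Your phrase ``the tangential distribution to $L$ (which similarly splits, by intertwining with the action of the Weyl group on the factors)'' is an unproved assertion standing in for the substantive content. There is also a smaller gap: you would need to argue that the walls respect the de~Rham decomposition of $\Sigma$ (a totally geodesic hypersurface of a product need not be a product hypersurface in general, e.g.\ in the flat factor), so that the Weyl group really does decompose as a product of reflection groups acting on disjoint factors. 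Lytchak's actual proof does not follow this route; it relies on the orbifold structure of $M/\fol$, dual foliations and an infinitesimal analysis near singular strata, precisely to avoid having to produce a parallel splitting of $TM$ by hand.
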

We will refer to the factors in the decomposition of $(M,\fol)$ as factors of type 1, 2, 3.

\subsection{Structure of polar foliations on simply connected manifolds}\label{SS:properties}

We collect here a number of results, about the structure of polar foliations on simply connected spaces.

Let $(M,\fol)$ be a polar foliation on a simply-connected space. Then:
\begin{enumerate}
\item The leaves of $\fol$ are closed, and the leaf space $M/\fol$ is a Hausdorff space (Theorem 1.2 of \cite{LytGeomRes}).
\item If there are singular leaves, the leaf space $M/\fol$ has boundary. Furthermore, {}{the points on the boundary correspond to} singular leaves, while points in the interior $(M/\fol)_0$ {}{correspond to} principal leaves (Theorem 1.6 of \cite{LytGeomRes}).
\item Given a section $\Sigma$, up to composing with the universal cover $\tilde{\Sigma}\to \Sigma$ we can assume that $\Sigma$ is simply connected and immersed in $M$. Then there is a discrete group $W$ of isometries of $\Sigma$ (called the \emph{Weyl group}) such that $\Sigma/W$ is isometric to $M/\fol$ (Proposition 4.16 of \cite{Toeben}). Furthermore, for $M$ simply connected, this group is generated by \emph{reflections}, i.e. isometries that fix a codimension 1 submanifold of $\Sigma$ called \emph{wall} (Theorem 1.1 of \cite{Alexandrino}).
\end{enumerate}

It follows that the leaf space is isometric to a smooth orbifold $\Sigma/\fol$, and away from its topological boundary it is a smooth convex manifold.

\subsection{Lagrangian families of Jacobi fields}\label{SS:Lagrangian}

We collect here the main definitions and results about Lagrangian families of Jacobi fields. The interested reader can find more information and proofs about the statements below, in \cite{LytJacobi} and \cite{radeschinotes}.

Let $\mathcal{V}$ be a vector bundle over an interval $I$, endowed with a Euclidean product $\langle\,,\,\rangle$ and a metric connection $\nabla$. A vector fields is then simply a function $X:I\to \V$ such that $X(t)\in \V_t$, and we will write $\nabla X(t)$ simply as $X'(t)$. Given a section $R\in \Sym^2(\V^*)$, a $R$-Jacobi field is a vector field $J:I\to \V$ such that $J''(t)+R_tJ(t)=0$ for $t\in I$.

A space $\Lambda$ of $R$-Jacobi fields is called \emph{isotropic} if
\[
\langle J_1'(t),J_2(t)\rangle-\langle J_1(t),J_2'(t)\rangle=0\qquad \forall J_1,J_2\in \Lambda, t\in I.
\]
Notice that the quantity is constant in $t$, so it is enough to check that is holds for some $t_0\in I$. An isotropic space of Jacobi fields is called \emph{Lagrangian} if furthermore $\dim \Lambda=\dim \V$.

Given an isotropic space of Jacobi fields $\Lambda$, the dimension of $\Lambda(t)=\{J(t)\mid J\in \Lambda\}$ is constant and equal to $\dim\Lambda$ for all but discretely many values $t_i$, where the dimension can drop. In this case, $t_i$ is called a \emph{focal distance} and the quantity $\dim \Lambda-\dim \Lambda(t_i)$ is the corresponding multiplicity. If $\Lambda(t)$ has maximal dimension, we say that $t$ is \emph{regular} otherwise it is \emph{singular}.

If $\Lambda$ is Lagrangian, then there exists a smooth family $S_t\in \Sym^2(\mathcal{V}_t^*)$ for all $t$, such that on regular times the equation $S_tJ(t)=J'(t)$ holds for all $J\in \Lambda$. Such operator satisfies the Riccati equation
\[
S_t'+S_t^2+R_t=0.
\]

Given an isotropic space $\Lambda$ of Jacobi fields along a geodesic $\gamma:\RR\to M$ and some interval $[a,b]$, let the \emph{index of $\Lambda$} over $[a,b]$ be
\[
\ind_{[a,b]}\Lambda=\sum_{t\in [a,b]}(\dim\Lambda-\dim\Lambda(t)).
\]
By the discussion above, the sum is actually finite for $[a,b]$ compact interval.

\subsection{Holonomy Jacobi fields in a polar foliation}\label{SS:holonomy}

Let $(M,\fol)$ be a polar foliation, and let $L_0$ be a principal leaf. Since the normal bundle is trivial and flat with respect to the normal connection, given a horizontal vector $x\in \nu_p L$ it is possible to extend $x$ to a parallel vector field $X$ along $L_0$, and this induces an \emph{end-point map}
\[
\phi_X:L_0\to M,\qquad \phi_X(q)=\exp_q X_q.
\]
The image of $\phi_X$ is the leaf through $\phi_X(p)$.

Fix a point $p\in L_0$. Then rescaling $X$ induces a family of maps $\phi_{tX}:L_0\times \RR\to M$ such that $\phi_{tX}(p)$ is the horizontal geodesic from $p$ with $\gamma'(0)=X(p)$, and for every $v\in T_pL_0$ the vector field $J_v(t):=d_p\phi_{tX}(v)$ is the Jacobi field along $\gamma(t)$ (called  \emph{holonomy Jacobi field}) with $J_v(0)=v$, $J_v'(0)=S_{\gamma'(0)}v$, where $S_{\gamma'(t)}$ denotes the {}{negative of the} shape operator of the leaves along $\gamma(t)$ in the direction $\gamma'(t)$.

Along $\gamma$, define $\V_t=\nu_{\gamma(t)} \Sigma$ with the Euclidean structure induced by the metric on $M$. Since $\Sigma$ is totally geodesic, $\V$ is parallel and in particular the Levi Civita connection restricts to a connection on $\V$. Letting $R_t\in \Sym^2(\V^*)$ be $R_t(v)=R(v,\gamma'(t))\gamma'(t)$, the $R$-Jacobi fields are simply the Jacobi fields in $M$ along $\gamma$, which stay in $\V$ the whole time.

Let $\Lambda_h$ denote the vector space spanned by holonomy Jacobi fields along $\gamma$. This can be seen as a Lagrangian space of $R$-vector fields in $\V$ along $\gamma$. For all regular times $t$, one has $\Lambda_h(t)=\V_t=T_{\gamma(t)}L_t$. Furthermore, the Riccati operator $S_t$ for $\Lambda_h$ coincides with the {}{negative of the} shape operator, $S_{\gamma'(t)}$.

\section{Factors of type 2: Hyperpolar foliations}
\label{sec:hyper}

In this section we focus our attention to factors of type 2, i.e. hyperpolar foliations $(M,\fol)$ on a simply connected symmetric space $M$ with non-negative curvature, without trivial factors.

The main goal is to prove Theorem \ref{T:splitting} for the factors of type 2.  That is, any factor of type 2 {}{with compact leaves} splits as a product of a hyperpolar foliation on a compact symmetric space, and an isoparametric foliation in Euclidean space.

We divide the section in three parts: First, given a polar foliation of type 2 $(M,\fol)$, we show that it splits as a product of foliations $(M_1,\fol_1)\times(M_2,\fol_2)$ such that the curvature operator on $M_1$ along $\fol_1$-horizontal directions is zero, and the curvature operator of $M_2$ along $\fol_2$-horizontal directions is only zero along the sections. Second, we show that $M_1$ is the Euclidean space. And finally, we show that $M_2$ is compact.

\subsection{Splitting of the foliation}
\begin{lemma}\label{L:condition}
Let $(M,\fol)$ be a factor of type 2, $p\in M$ a regular point, $\Sigma$ the section through $p$, $x\in T_p\Sigma$, and $\gamma(t)=\exp_p(tx)$ the corresponding horizontal geodesic. Finally, let $\V_t=\nu_{\gamma(t)}\Sigma$. The following are equivalent:
\begin{enumerate}
\item $\operatorname{tr}\big|_{\V_0}R(\cdot, x)x=0$.
\item $R(\cdot,x)x=0$.
\item $R_t=R(\cdot,\gamma'(t))\gamma'(t)=0$ for all $t$.
\item $\operatorname{tr}\big|_{\V_t}R_t=0$ for all $t$.
\item The space $\Lambda_h$ of holonomy Jacobi fields along $\gamma$ satisfies $\ind_{(-\infty,\infty)}\Lambda_h<\infty$.
\end{enumerate}
\end{lemma}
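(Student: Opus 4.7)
The plan is to handle the easy implications first and then treat the deep implication $(5) \Rightarrow (1)$ separately. The implications $(3) \Rightarrow (2)$ and $(4) \Rightarrow (1)$ are obtained by setting $t=0$, while $(2) \Rightarrow (1)$ and $(3) \Rightarrow (4)$ are immediate from the definition of trace. For $(3) \Rightarrow (5)$, if $R_t\equiv 0$ then the Jacobi equation reduces to $J''=0$, so every holonomy Jacobi field is affine in $t$ and has at most one zero, forcing $\ind_{(-\infty,\infty)}\Lambda_h$ to be finite.

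Next I would prove the two enhancements $(1)\Rightarrow(2)$ and $(2)\Rightarrow(3)$. The key structural fact is that $R_t:=R(\cdot,\gamma'(t))\gamma'(t)$, viewed as a self-adjoint operator on $T_{\gamma(t)}M$, vanishes on the section $T_{\gamma(t)}\Sigma$ and restricts to a positive semi-definite operator on $\V_t$. Indeed, since $\Sigma$ is totally geodesic the Codazzi equation gives $R(w,\gamma'(t))\gamma'(t)\in T_{\gamma(t)}\Sigma$ for $w\in T_{\gamma(t)}\Sigma$, and since $\Sigma$ is flat (factor of type 2), the Gauss equation then gives $R(w,\gamma'(t))\gamma'(t)=0$. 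Curvature symmetry then forces $R_t(\V_t)\subset\V_t$, and non-negativity of sectional curvature makes $R_t|_{\V_t}$ positive semi-definite. Hence $(1)\Rightarrow(2)$, since a positive semi-definite operator with zero trace is zero. For $(2)\Rightarrow(3)$, use that $M$ is symmetric, so $\nabla R=0$; parallel transport along $\gamma$ carries $R_0$ to $R_t$ and $\gamma'(0)$ to $\gamma'(t)$, so $R_0=0$ propagates to $R_t=0$ for all $t$.

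The heart of the proof is $(5)\Rightarrow(1)$, which I would prove by contraposition using the Riccati operator $S_t$ of the Lagrangian family $\Lambda_h$ from Section~\ref{SS:Lagrangian}. If $R|_{\V_0}\neq 0$, the positive semi-definiteness established above gives a unit eigenvector $v\in\V_0$ with eigenvalue $\lambda>0$; let $v(t)$ denote its parallel transport, which is an eigenvector of $R_t$ with the same eigenvalue. On any interval of regular times, the Riccati equation $S_t'+S_t^2+R_t=0$ combined with Cauchy--Schwarz yields, for $f(t):=\langle S_tv(t),v(t)\rangle$,
\[
f'(t)=-|S_tv(t)|^2-\lambda\leq -f(t)^2-\lambda.
\]
Comparing with the scalar Riccati equation $h'=-h^2-\lambda$, whose explicit solutions $-\sqrt{\lambda}\tan(\sqrt{\lambda}(t-C))$ blow up to $-\infty$ within time $\pi/\sqrt{\lambda}$ from any initial value, we conclude that $S_t$ itself must leave the domain of existence within time $\pi/\sqrt{\lambda}$ of any regular starting point. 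Blow-ups of $S_t$ occur exactly at focal times of $\Lambda_h$, so iterating forward and backward from a single regular base yields a focal point in every interval of length $\pi/\sqrt{\lambda}$, hence infinitely many on $(-\infty,\infty)$, contradicting (5).

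The main difficulty I anticipate is the Riccati-comparison step: one has to justify that a scalar blow-up of $f$ actually witnesses a blow-up of the full operator $S_t$ (here a consequence of the correspondence between blow-up times of the Riccati operator of a Lagrangian family and the focal times of that family, recalled in Section~\ref{SS:Lagrangian}), and that the argument can be legitimately restarted past each focal time. The remaining implications are purely algebraic consequences of the curvature geometry of a totally geodesic flat section inside a non-negatively curved symmetric space.
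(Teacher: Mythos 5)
Your proof is correct and follows essentially the same strategy as the paper's: the elementary implications are handled identically, and the hard direction is the contrapositive of a Riccati comparison showing that positive curvature along $\gamma$ forces focal times of $\Lambda_h$ at uniformly bounded intervals. The only (harmless) variation is that you run the comparison on the scalar $\langle S_t v(t),v(t)\rangle$ for a single parallel eigenvector with positive eigenvalue, using the positive semi-definiteness of $R_t|_{\V_t}$ that you justify via Gauss--Codazzi, whereas the paper applies the averaged comparison to $\tfrac{1}{n}\operatorname{tr}S_t$ directly from the trace hypothesis; both reductions are standard and yield the same bound $\pi/\sqrt{\lambda}$ between consecutive singular times.
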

\begin{proof}
($1\Rightarrow 2$) Follows from the fact that the eigenvalues of $R(\cdot, x)x$ are non-negative, hence $R(v,x)x=0$ for $v\in \V_0$. But since $\Sigma$ is flat, one has that $R(y, x)x=0$ for $y\in T_p\Sigma$ as well.

($2\Rightarrow 3$) Follows from the fact that $R_t$ is parallel along $\gamma$ hence the eigenvalues of $R_t$ are constant along $\gamma$.

($3\Rightarrow 4$) and ($4\Rightarrow 1$) are obvious.

($3\Rightarrow 5$) Let $e_1,\ldots e_n\in \V_0$ be an orthonormal basis of eigenvectors for the Riccati operator $S_0$ of $\Lambda_h$ (cf. section \ref{SS:holonomy}), with eigenvalues $\mu_1\ldots \mu_n$. Then since $R_t=0$, the Jacobi fields in $\Lambda_h$ with $J_i(0)=e_i$ and $J_i'(0)=S_0e_i=\mu_ie_i$ are given by $J_i(t)=(1+\mu_it)E_i$. In particular, the $J_i(t)$ are everywhere orthogonal to one another, and the singular times for $\Lambda_h$ are $t_i=-{1\over\mu_i}$ whenever $\mu_i\neq 0$. In particular,  $\ind_{(-\infty,\infty)}\Lambda_h\leq n<\infty$.

($5\Rightarrow 4$) Suppose by contradiction that (4) does not hold, $\operatorname{tr}|_{\V_t}R_t>0$. Since the trace of $R_t$ is constant along $\gamma$, it follows that $\operatorname{tr}|_{\V_t}R_t>n\delta>0$ for some $\delta$. Fix a regular point $q=\gamma(t_*)$ along $\gamma$, and consider the function $a(t)={1\over n}\operatorname{tr}|_{\V_{t+t_*}} S_{t+t_*}$, where $S_t$ is as usual the Riccati operator $S_t$ of $\Lambda_h$. Since $\operatorname{tr}|_{\V_t}R_t>n\delta>0$ we can apply the Average Comparison Theory for the Riccati operator, to obtain that $a(t)\leq \bar{a}(t)$ where $\bar{a}(t)$ is the solution of the model equation $\bar{a}'+\bar{a}^2+\delta=0$, with initial condition $\bar{a}(0)=a(0)$. Such a solution is given by $\bar{a}(t)=\sqrt{\delta}\tan(\sqrt{\delta}(t_0-t))$ for some $t_0$. As a consequence of the Comparison Theorem, it follows that the first positive singular time of $\Lambda_h$, which coincides with the first time $t_1$ such that $\lim_{t\to t_1^-}a(t)=-\infty$, is bounded above by ${\pi\over \sqrt{\delta}}$. That is, any two singular times of $\Lambda_h$ are at most ${\pi\over \sqrt{\delta}}$ apart. Since every singular time contributes at least 1 to the index, it follows that $\ind_{(-\infty,\infty)}\Lambda_h=\infty$.
\end{proof}

Given a type 2 factor $(M,\fol)$, let $\Sigma$ be a section. Again, we will think of $\Sigma\simeq \RR^n$ as a flat space, (possibly not injectively) immersed in $M$.

For each $p\in \Sigma$, define $R: T_p\Sigma\to \Sym^2(T_pM)$ given by $x\mapsto R(\cdot, x)x$, and let $\mathcal{D}_p$ denote the kernel of $R$. Since $R$ maps $T_p\Sigma$ into the set of positive semidefinite self-adjoint endomorphisms of $T_pM$, it follows that $\mathcal{D}_p$ is a vector space: in fact given $x_0,x_1\in \mathcal{D}_p$, let $x_t=tx_1+(1-t)x_0$ and $f(t)=\operatorname{tr} R(\cdot, x_t)x_t$. Then $f(t)$ is a quadratic polynomial, everywhere nonnegative and equal to $0$ at $t=0,1$. Then $f(t)\equiv 0$ that is $x_t\in \mathcal{D}_p$ for every $t$.

\begin{lemma}\label{L:properties-of-D}
Let $(M,\fol)$ be a hyperpolar foliation on a simply connected symmetric space with nonnegative curvature. Given a section $\Sigma$, the distribution $\mathcal{D}\subseteq T\Sigma$ defined above is parallel (in particular integrable with totally geodesic integral manifolds), and contained in the Euclidean factor of $M$.
\end{lemma}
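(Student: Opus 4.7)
The plan relies on two classical facts: the parallelism $\nabla R \equiv 0$ of a symmetric space, and the de Rham decomposition of a simply connected symmetric space with $\mathrm{sec} \geq 0$.

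For the parallelism of $\mathcal{D}$, I would observe that the defining condition $R(\cdot, x)x = 0$ on $T_pM$ is preserved by parallel transport along $\Sigma$. Since $\Sigma$ is totally geodesic, parallel transport with respect to the Levi-Civita connection of $\Sigma$ agrees on tangent vectors to $\Sigma$ with parallel transport in $M$. Because $\nabla^M R = 0$, parallel transport intertwines the curvature tensor, hence sends $\mathcal{D}_p$ into $\mathcal{D}_q$ for any $q \in \Sigma$. As $\Sigma$ is flat and simply connected, such a parallel distribution is automatically a constant family of subspaces, so it is integrable, with affine, totally geodesic integral manifolds.

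For the containment in the Euclidean factor, I would use the de Rham decomposition $M = M_0 \times M_+$, where $M_0$ is Euclidean and $M_+$ is a product of irreducible simply connected symmetric spaces of compact type. Each irreducible compact-type factor is Einstein with positive Einstein constant, so $\mathrm{Ric}_M$ is positive semi-definite with kernel exactly $TM_0$. If $x \in \mathcal{D}_p$ then the Jacobi operator $R_x = R(\cdot, x)x$ is identically zero on $T_pM$; taking traces yields $\mathrm{Ric}_M(x, x) = \mathrm{tr}(R_x) = 0$, which forces $x \in T_pM_0$. Equivalently, by item (1) of Lemma~\ref{L:condition}, one only needs $\mathrm{tr}|_{\nu_p\Sigma}R(\cdot, x)x = 0$, which for $x \in T_p\Sigma$ coincides with $\mathrm{Ric}_M(x, x)$ since $\Sigma$ is flat (so $R(\cdot, x)x$ vanishes on the $T_p\Sigma$ part).

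I do not expect a serious obstacle. The two mild inputs are that $\Sigma$ is \emph{totally} geodesic (needed to match the intrinsic parallel transport of $\Sigma$ with the ambient one), and the Einstein property of irreducible compact-type symmetric spaces (needed to identify the kernel of $\mathrm{Ric}_M$ with the de Rham Euclidean factor of $M$).
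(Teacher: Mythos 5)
Your proof is correct. The parallelism argument is essentially the same as the paper's: both parallel-transport a vector in $\mathcal{D}$ and invoke $\nabla R=0$ to conclude the defining condition $R(\cdot,x)x=0$ is preserved. For the containment in the Euclidean factor, you take a genuinely different route. The paper works Lie-algebraically: writing $M=G/H$, $\g=\h\oplus\m$, and using $R(x,y)z=[[x,y],z]$, it shows that $x\in\mathcal{D}_p$ forces $[x,\m]=0$ and then $[x,\h]=0$, hence $x$ lies in the center of $\g$, which produces the splitting $G=\RR^n\times G_c$. You instead use the de Rham decomposition $M=\RR^n\times M_c$ (valid because any non-compact-type de Rham factor would contribute strictly negative curvature), observe that $\operatorname{Ric}_M$ is block-diagonal and strictly positive on each irreducible compact-type factor (Einstein with positive constant), and note that $x\in\mathcal{D}_p$ gives $\operatorname{Ric}_M(x,x)=\operatorname{tr}R(\cdot,x)x=0$, forcing $x\in T_pM_0$. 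The Ricci argument is arguably more conceptual and avoids the explicit Lie-algebra computation; the paper's argument is more self-contained in that it derives the splitting $G=\RR^n\times G_c$ rather than invoking de Rham and the Einstein property as external inputs. Both are complete.
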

\begin{proof}
Let $\gamma$ be a path in $\Sigma$ and let $X(t)$ be a parallel vector field along $\gamma$ with $X(0)\in \mathcal{D}_{\gamma(0)}$ (hence $R(\cdot, X(0))X(0)=0$). Since $R$ is parallel, we then have that the $1$-form $R(\cdot, X(t))X(t)$ is parallel as well, and in particular zero everywhere since it is zero for $t=0$. Therefore $X(t)\in \mathcal{D}_{\gamma(t)}$ hence $\mathcal{D}$ is parallel.

Write now $M=G/H$ for some symmetric pair $(G,H)$ with $H$ compact. Furthermore, assume $eH=p$, and let $\pi:G\to M$ denote the canonical projection. Letting $\g,\h$ be the Lie algebras of $G$ and $H$ respectively, there is a splitting $\g=\h\oplus \m$ where $\m$ can be identified via $d_e\pi$ with $T_pM$. Recall that, with respect to this identification, the curvature operator of $M$ can be expressed as $R(x,y)z=[[x,y],z]$ for all $x,y,z\in \m$.
Let $x\in \mathcal{D}_p$. By Lemma \ref{L:condition}, for every  $v\in T_pM\simeq \m$ one has
\[
R(v,x)x=[[v,x],x]=0\Rightarrow \|[v,x]\|^2=-\langle[[v,x],x],v\rangle =0\Rightarrow [x,v]=0.
\]
Given $w\in \h$, we have that $y=[x,w]\in \m$ hence, using the bi-invariant metric in $\g$, we get
\[
\|[x,w]\|^2=\langle[x,w],[x,w]\rangle=-\langle[x,y],w\rangle=0
\]
and therefore $x$ belongs to the center of $\g$. In particular, there is a splitting $G=\RR^n\times G_c$ for some $n>0$ and $G_c$ some compact simply connected group (possibly $G_c=\{e\}$). Since $H$ is compact it is contained in $G_c$, hence $M=\RR^n\times G_c/H$, with $x$ contained in the Euclidean factor.
\end{proof}

\begin{lemma}\label{L:Weyl-splits}
Suppose that $(M,\fol)$ is a factor of type 2 whose distribution $\mathcal{D}$ is neither trivial nor it contains $T\Sigma$. Then the sections split as a product $\Sigma_1\times \Sigma_2$ with $\Sigma_1$ an integral manifold for $\mathcal{D}$, and the Weyl group $W$ splits as a product $W_1\times W_2$ where $W_i$ acts on $\Sigma_i$ and fixes $\Sigma_{2-i}$, $i=1,2$.
\end{lemma}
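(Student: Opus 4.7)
My plan is to prove the two splittings in sequence.

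The splitting of $\Sigma$ follows immediately from Lemma~\ref{L:properties-of-D}. Since $\mathcal{D}$ is a parallel distribution in the flat simply connected $\Sigma$, both $\mathcal{D}$ and $\mathcal{D}^\perp$ correspond under the identification $\Sigma \simeq \RR^n$ to a pair of complementary linear subspaces, yielding an isometric product decomposition $\Sigma = \Sigma_1 \times \Sigma_2$ with $T\Sigma_1 = \mathcal{D}$ and $T\Sigma_2 = \mathcal{D}^\perp$. The hypothesis on $\mathcal{D}$ ensures both factors are positive-dimensional.

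For the splitting of $W$, my plan is to show that every $w \in W$ preserves the decomposition $\mathcal{D} \oplus \mathcal{D}^\perp$, and hence decomposes as $w = w_1 \times w_2$ under the splitting of $\Sigma$. By Lemma~\ref{L:condition}, $\mathcal{D}_p$ admits the intrinsic description as the nullspace of $\mathrm{Ric}_M|_{T_p\Sigma}$. Since $p$ and $wp$ lie in the same leaf of $\fol$ (they map to the same point in $M/\fol = \Sigma/W$) and regular leaves of polar foliations on simply connected manifolds have trivial normal holonomy by Alexandrino-Toeben, the differential $w_*: T_p\Sigma \to T_{wp}\Sigma$ should be realized by parallel transport in $M$ along any path in the common leaf. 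Because $M$ is symmetric, parallel transport preserves $\mathrm{Ric}_M$, and therefore $w_*(\mathcal{D}_p) = \mathcal{D}_{wp}$, as desired.

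Granting this, a reflection $s \in W$ decomposes as $s = s_1 \times s_2$ under the product structure, and because its fixed locus has codimension $1$, exactly one of $s_1, s_2$ must be a reflection of its factor and the other the identity. Letting $W_i$ denote the subgroup of $W$ generated by reflections of type ``reflection on $\Sigma_i$, identity on $\Sigma_{2-i}$,'' the subgroups $W_1$ and $W_2$ commute (acting on orthogonal factors); since $W$ is generated by reflections by \cite{Alexandrino}, we conclude $W = W_1 \times W_2$ with $W_i$ acting on $\Sigma_i$ and fixing $\Sigma_{2-i}$.

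The delicate step will be the identification of $w_*$ with parallel transport in $M$. This is the essential feature that the Weyl group of a polar foliation on a simply connected space really arises from ambient holonomy of regular leaves; the argument should follow from standard facts about polar foliations, but it requires careful bookkeeping between the intrinsic $W$-action on $\Sigma$ and the ambient geometry of $M$.
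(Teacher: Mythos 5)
Your strategy is genuinely different from the paper's and is viable in outline, but the justification of the central claim $w_*(\mathcal{D}_p)=\mathcal{D}_{wp}$ has a real gap. In T\"oben's construction of $W$ (Proposition 4.16 of \cite{Toeben}), the differential of a Weyl group element at a regular point $p$ is the \emph{normal} parallel transport $\tau^{\perp}_c:\nu_pL\to\nu_{wp}L$ (with respect to the normal connection of the leaf) along a leaf curve $c$, \emph{not} the ambient Levi--Civita parallel transport of $M$: ambient parallel transport along a curve in $L$ preserves $\nu L$ only when the second fundamental form of $L$ vanishes, which is not the case here. Consequently the step ``parallel transport preserves $\mathrm{Ric}_M$ because $M$ is symmetric'' does not apply to $w_*$, and the invariance of $\mathcal{D}$ does not follow as written. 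The claim is nevertheless true, and the paper itself contains the tool that repairs it (it is used verbatim in the proof of Proposition \ref{P:Disall}): by equifocality, the geodesics $\exp_p(tx)$ and $\exp_{wp}(t\,w_*x)$ meet the same leaves at the same times, so the associated families of holonomy Jacobi fields have equal index over $(-\infty,\infty)$; by the equivalence $(1)\Leftrightarrow(5)$ of Lemma \ref{L:condition}, $x\in\mathcal{D}_p$ if and only if $w_*x\in\mathcal{D}_{wp}$. With this repair your argument goes through: each reflection decomposes as $s_1\times s_2$ with exactly one factor a reflection of its factor and the other the identity, and since $W$ is generated by reflections while the two resulting subgroups commute and intersect trivially, $W=W_1\times W_2$.

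For comparison, the paper avoids the holonomy description of $W$ altogether and argues combinatorially on the set of walls: Lemma \ref{L:condition} shows that $\Sigma_1$ meets only finitely many walls while every horizontal geodesic not tangent to $\Sigma_1$ meets infinitely many; a hypothetical ``mixed'' wall, whose reflection is applied to the infinitely many walls with normal tangent to $\Sigma_2$, would produce infinitely many walls meeting $\Sigma_1$, a contradiction. The group splitting then follows from Lemma 2.4 of \cite{HeintzeLiu}. Your route, once the differential of $w$ is correctly identified and the invariance of $\mathcal{D}$ is established via equifocality, is the more conceptual one; the paper's route has the advantage of using only the wall structure and the fact that reflections permute walls, without needing to know that Weyl group elements are parallel displacement maps.
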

\begin{proof}
Let $\Sigma\simeq \RR^k$ be a (simply connected, immersed but possibly non-injectively) section of $\fol$, and denote by $\Gamma$ the set of codimension 1 affine subspaces of $\Sigma$ fixed by some reflection in the Weyl group $W$ (the \emph{walls} of $W$).

Fix a regular point $0\in \Sigma$ as the origin of $\Sigma$, denote with $\Sigma_1$ the integral submanifold of $\mathcal{D}$ through $0$, and denote $\Sigma_2$ the affine subspace of $\Sigma$ through $0$ perpendicular to $\Sigma_1$. By Proposition 3.6 in \cite{Terng}, the union of all walls for the Weyl group is precisely the set of singular points on $\Sigma$. Hence a geodesic starting at regular point in $\Sigma$ passes a wall if and only if there is an increase for the index of $\Lambda_h$. By Lemma \ref{L:condition},  a geodesic starting at a regular point in $\Sigma$ which is not tangent to $\Sigma_1$ must intersect infinitely many walls. In particular, the number of walls must be infinite.

We claim that $\Sigma_1$ intersects finitely many walls. In fact, assume that there is a sequence of walls $w_i$ intersecting $\Sigma_1$, and let $v_i$ be a unit normal vector for $w_i$. Then $v_i$ can be written as $a_i x_i + b_i y_i$ where $x_i$ and $y_i$ are unit vectors tangent to $\Sigma_1$ and $\Sigma_2$ respectively and $a_i \neq 0$. Without loss of generality, we can assume that the $x_i$'s converge to a unit vector $x$ tangent to $\Sigma_1$. For $i$ sufficiently large, $\langle v_i, x\rangle = \langle a_i x_i, x\rangle \neq 0$. Hence the geodesic $\exp(tx)$ intersects infinitely many walls $w_i$, which contradicts Lemma \ref{L:condition}.

Since a wall intersects $\Sigma_1$ if and only if its normal vector is not perpendicular to $\Sigma_1$, there are only finitely many walls whose normal vector can be written as $v=v_1+v_2$ with $v_i$ tangent to $\Sigma_i$ and $v_1 \neq 0$. For infinitely many other walls, their normal vectors must be tangent to $\Sigma_2$.

Since the action of the Weyl group preserves the set of walls, we claim that the normal vector to every wall is either tangent to $\Sigma_1$ or to $\Sigma_2$. In fact, assume by contradiction that there is a wall with normal vector $u=u_1+u_2$ and both $u_1,u_2$ are non-zero. Notice that, since a geodesic $\gamma(t)=\exp_ptu_2$ from a regular point $p$ in $\Sigma$ must intersect infinitely many walls $w_i$, which means that their normal vectors $v_i$ satisfy $\langle v_i,u_2\rangle\neq 0$. Furthermore, for all but finitely many of these the normal vector $v_i$ is tangent to $\Sigma_2$.

 The reflection $r$ through this wall will map a wall to another wall. It is easy to check that a reflection $r$ fixing a wall $w$ with unit normal $v$, takes a wall $w'$ with normal vector $v'$ to a wall with normal vector
\[
r_*(v')=v'-2\langle v',v\rangle v.
\]

Assuming that there is a wall with normal vector $u=u_1+u_2$, apply the corresponding reflection $r$ to the infinite walls $w_i$ above, whose normal vector $v_i$ is tangent to $\Sigma_2$ and such that  $\langle v_i,u_2\rangle\neq 0$. Each wall $r(w_i)$ has now normal vector $r_*(v_i)=v_i-2\langle v_i,u_2\rangle u$ and in particular its component tangent to $\Sigma_1$ is $-2\langle v_i,u_2\rangle u_1\neq 0$. Therefore, the infinitely many walls $r(w_i)$ intersect $\Sigma_1$, contradicting the fact that there are only finitely many such walls.

In particular $\Gamma=\Gamma_1\cup \Gamma_2$ where $\Gamma_i$ denotes the set of walls whose normal vector is tangent to $\Sigma_i$. By Lemma 2.4 in \cite{HeintzeLiu} the Weyl group splits as a product $W=W_1\times W_2$, where $W_i$ is generated by the reflections in $\Gamma_i$, and it acts on $\Sigma_i$ while fixing $\Sigma_{2-i}$ ($i=1,2$).
\end{proof}

The splitting of the Weyl group action induces a splitting of the symmetric space itself.
\begin{proposition}\label{P:splittingw}
Assume that $(M,\fol)$ is a factor of type 2, whose section splits $\Sigma=\Sigma_1\times \Sigma_2$ so that the Weyl group $W$ splits as $W=W_1\times W_2$, with $W_i$ acting on $\Sigma_i$ and fixing $\Sigma_{2-i}$. Then there is a splitting of the foliation $(M,\fol)=(M_1,\fol_1)\times (M_2,\fol_2)$ such that $(M_i,\fol_i)$ is a factor of type 2 with section $\Sigma_i$.
\end{proposition}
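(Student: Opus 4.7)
The plan is to construct a parallel orthogonal decomposition $TM^{\rm reg}=\mathcal{H}_1\oplus\mathcal{H}_2$ on the regular part of $M$ which restricts along $\Sigma$ to $T\Sigma_i\oplus V_i$ for appropriate subbundles $V_i\subset TL$, and then invoke de Rham's decomposition theorem.

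At a regular point $p\in\Sigma$, the family of shape operators $\{S_x:T_pL\to T_pL\mid x\in T_p\Sigma\}$ is a commuting family of self-adjoint endomorphisms (a standard consequence of $\Sigma$ being totally geodesic and flat in the symmetric ambient $M$). Simultaneously diagonalizing them yields $T_pL=\bigoplus_\alpha E_\alpha$ with $S_x|_{E_\alpha}=\langle n_\alpha,x\rangle\,\mathrm{Id}$ for curvature normals $n_\alpha\in T_p\Sigma$. From the description of holonomy Jacobi fields in Section~\ref{SS:holonomy}, the walls of $W$ through $p$ are the affine hyperplanes $\{x\in T_p\Sigma\mid 1+\langle n_\alpha,x\rangle=0\}$, so Lemma~\ref{L:Weyl-splits} forces each $n_\alpha$ to lie in either $T_p\Sigma_1$ or $T_p\Sigma_2$. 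Setting $V_i:=\bigoplus_{n_\alpha\in T_p\Sigma_i}E_\alpha$ and $\mathcal{H}_i|_p:=T_p\Sigma_i\oplus V_i$ gives the desired orthogonal splitting at $p$.

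I would then propagate $\mathcal{H}_i$ to a smooth distribution on $M^{\rm reg}$: along $\Sigma$ by the flat connection, and across a regular leaf using that the normal holonomy is trivial (Section~\ref{SS:properties}), so curvature normals vary equivariantly and the subbundles $V_i$ assemble smoothly. The crucial step is the parallelism $\nabla\mathcal{H}_i\subseteq\mathcal{H}_i$. Along horizontal directions $x\in T_p\Sigma_i$ this follows from the Riccati equation $S_t'+S_t^2+R_t=0$ together with the symmetric-space assumption $\nabla R=0$: the operator $R_t$ preserves the $E_\alpha$-decomposition, so the evolution of $S_t$ does as well, and the resulting parallel transport of $V_i$ is compatible with the splitting $T_p\Sigma=T_p\Sigma_1\oplus T_p\Sigma_2$. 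Along vertical directions, $\nabla R=0$ combined with the polar-foliation Codazzi identity forces the curvature normals to vary parallelly, preserving their assignment into $T\Sigma_1$ versus $T\Sigma_2$.

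Once $\mathcal{H}_1,\mathcal{H}_2$ are parallel and complementary on $M^{\rm reg}$, they extend across the singular set (which has codimension at least two, since singular leaves in a hyperpolar foliation with $\dim\Sigma\geq 1$ have codimension at least $\dim\Sigma+1\geq 2$), and since $M$ is complete and simply connected, de Rham's theorem produces an isometric splitting $M=M_1\times M_2$ with $TM_i=\mathcal{H}_i$. By construction $\Sigma_i\subset M_i$ is a totally geodesic flat section of the induced foliation $\fol_i$, and each leaf of $\fol$ through $p$ splits as a Riemannian product $L_1\times L_2$ with $T_pL_i=V_i$, so $\fol=\fol_1\times\fol_2$. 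Non-negative curvature, flatness of sections, and simple connectivity descend to each factor, making $(M_i,\fol_i)$ a factor of type 2 with section $\Sigma_i$. The hard part will be the parallelism of $\mathcal{H}_i$ in the vertical directions: it is genuinely a symmetric-space phenomenon relying on $\nabla R=0$, rather than just on non-negative curvature.
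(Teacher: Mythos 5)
Your proposal tries to carry out the de Rham splitting directly in $M$, bypassing the Hilbert-space lift that the paper (following Ewert and Terng--Thorbergsson) uses. This is an attractive idea, but the key step of the proposal rests on a formula that is only valid in flat space. You assert that ``the walls of $W$ through $p$ are the affine hyperplanes $\{x\in T_p\Sigma\mid 1+\langle n_\alpha,x\rangle=0\}$.'' That identification comes from the Euclidean Jacobi field $J(t)=(1+t\langle n_\alpha,x\rangle)V(t)$, which is the solution of $J''=0$, $J(0)=v$, $J'(0)=S_xv$. In the symmetric space the Jacobi equation is $J''+R_tJ=0$ with $R_t\neq 0$; the holonomy Jacobi fields are oscillatory, and a single curvature-normal direction produces a whole \emph{periodic array} of walls (this is precisely what Lemma~\ref{L:condition} quantifies: when $\operatorname{tr} R_t>0$ the index of $\Lambda_h$ is infinite). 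Worse, the shape operator $S_t$ and the curvature operator $R_t$ need not commute, so the $E_\alpha$-decomposition is not preserved along horizontal geodesics and the simple ``one curvature normal, one wall'' picture breaks down entirely. Consequently the deduction that Lemma~\ref{L:Weyl-splits} forces each $n_\alpha$ to lie in $T_p\Sigma_1$ or $T_p\Sigma_2$ does not follow from what you've written.

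This is not a cosmetic issue but exactly the obstruction that motivates the machinery in Section~\ref{SS:liftHilbert}: after lifting to the isoparametric foliation $(V,\tilde{\fol})$ on the Hilbert space, the ambient curvature vanishes, the lifted shape operators commute, and the lifted curvature normals \emph{do} satisfy the affine-wall formula. The paper's (appendix) proof defines the subspaces $V_1,V_2\subset V$ as spans of horizontal lifts of $P_1,P_2$, uses the Heintze--Liu splitting lemmas to show $V=V_1\oplus V_2$, and then pushes the splitting down through $\psi$ and $\rho$ to $G$ and $M$ --- a step which is itself nontrivial because of the Euclidean factor. Your remaining steps (the parallelism of $\mathcal{H}_i$ in vertical directions via $\nabla R=0$ and a ``polar-foliation Codazzi identity,'' and the extension of the parallel distribution across the codimension-$\geq 2$ singular set before applying de~Rham) are stated too vaguely to evaluate; the Codazzi equation does involve the ambient curvature, and I do not see how to make the vertical parallelism argument go through as sketched. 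If you want to repair your approach, you would need either to redo the wall analysis honestly in the curved setting (tracking which walls come from which curvature normal via the $R_t$-eigenspace decomposition, as is done for factors of type~3 in Section~\ref{sec:spherical}) or, more realistically, to pass to the Hilbert space lift as the paper does.
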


This result was proved by Ewert in \cite{Ewert} under the slightly stronger assumption that $M$ does not have Euclidean factors. {}{We wrote a proof of Proposition {\ref{P:splittingw}}, but recently a much shorter one has been given} by Silva and Speran\c{c}a {\cite{SS}} using the completeness of Wilking's foliation $(M,\fol^\#)$ dual to $(M,\fol)$. We will then omit our proof.

\subsection{The case $\mathcal{D}\supseteq T\Sigma$}
\begin{proposition}[If $\mathcal{D}\supseteq T\Sigma$]\label{P:Disall}

Suppose $(M,\fol)$ is a hyperpolar foliation on a simply connected symmetric space with non-negative curvature, let $\Sigma$ be a section and assume that the distribution $\mathcal{D}$ contains $T\Sigma$. Then there is a splitting $M=\RR^n\times M'$ such that $(M,\fol)$ splits as $(\RR^n,\fol_0)\times M'$. In particular, if $(M,\fol)$ is of type 2 (no non-trivial factors) then $\mathcal{D}\supseteq T\Sigma$ implies $M=\RR^n$.
\end{proposition}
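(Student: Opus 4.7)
The plan is to use Lemma \ref{L:properties-of-D} to confine the horizontal distribution of $\fol$ entirely to the Euclidean de Rham factor of $M$, and then to show that the complementary factor is contained in every leaf. Write the de Rham decomposition $M=\RR^n\times M'$, with $\RR^n$ the Euclidean factor and $M'$ carrying no flat factor. Since $T\Sigma\subseteq \mathcal{D}$ by hypothesis, and $\mathcal{D}$ is contained in the Euclidean factor by Lemma \ref{L:properties-of-D}, we have $T_p\Sigma\subseteq T_{p_1}\RR^n\oplus\{0\}$ for every $p=(p_1,p_2)\in\Sigma$. Combined with the total geodesicity of both $\Sigma$ and the slice $\RR^n\times\{p_2\}$, this forces $\Sigma\subseteq \RR^n\times\{p_2\}$, and at any regular $p\in L$ gives $T_pL=(T_p\Sigma)^\perp\supseteq \{0\}\oplus T_{p_2}M'$.

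The core step is to upgrade this to the statement that every regular leaf has the product form $L=L_1\times M'$. Fix $p=(p_1,p_2)\in L$ regular, and consider the set $A=\{q_2\in M' \mid (p_1,q_2)\in L\}$. It is non-empty (it contains $p_2$) and closed in $M'$ since leaves of $\fol$ are closed (item (1) of Section \ref{SS:properties}). To see it is open, note that any $(p_1,q_2)\in L$ is again a regular point of $\fol$ (since $L$ is a regular leaf), so by the previous step $\{0\}\oplus TM'$ is tangent to $L$ there. This distribution is integrable with integral manifolds $\{p_1\}\times M'$, so a neighborhood of $q_2$ in $M'$ lies in $A$. Connectedness of $M'$ gives $A=M'$ and hence $\{p_1\}\times M'\subseteq L$; taking $L_1$ to be the projection of $L$ to $\RR^n$, we conclude $L=L_1\times M'$.

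With this in hand, the foliation $\fol_0$ on $\RR^n$ whose regular leaves are the $L_1$'s (with singular leaves obtained by closure) yields the desired splitting $(M,\fol)=(\RR^n,\fol_0)\times M'$, in which $\fol_0$ is visibly polar with section the copy of $\Sigma$ sitting inside $\RR^n\times\{p_2\}$. For the ``in particular'' clause, the $M'$ factor attached in this splitting is a trivial factor in the sense of Theorem \ref{T:Decomposition}; hence if $(M,\fol)$ is of type 2 and admits no trivial factors, $M'$ must reduce to a point and $M=\RR^n$. The step I expect to require the most care is the transition from regular to singular leaves — verifying that singular leaves of $\fol$ also have the product form, and that the induced $\fol_0$ is polar globally on $\RR^n$ rather than merely on the regular stratum; this should nevertheless follow from the structural properties of polar foliations summarized in Section \ref{SS:properties}.
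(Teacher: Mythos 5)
There is a genuine gap at the openness step of your open--closed argument. What your first paragraph actually establishes is that the \emph{given} section $\Sigma$ lies in a slice $\RR^n\times\{p_2\}$, and hence that $\{0\}\oplus T_{q_2}M'\subseteq T_qL$ at the points $q\in L\cap\Sigma$ --- a discrete set (a Weyl group orbit in $\Sigma$). At an arbitrary regular point $q=(p_1,q_2)\in L$ the normal space $\nu_qL$ is the tangent space of the section $\Sigma_q$ \emph{through $q$}, which is in general a different section, and nothing you have said shows that $T_q\Sigma_q$ is contained in the Euclidean factor: the hypothesis $\mathcal{D}\supseteq T\Sigma$ is stated for one section only, and Lemma \ref{L:properties-of-D} only tells you that the kernel distribution of \emph{each} section sits inside the Euclidean factor, not that it exhausts the tangent space of each section. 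So the sentence ``by the previous step $\{0\}\oplus TM'$ is tangent to $L$ there'' is unjustified; without it the set $A$ is closed but not visibly open, and the connectedness argument does not close.

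The paper fills exactly this hole with an equifocality argument: given $p'\in L$ and $x'\in\nu_{p'}L$, let $x\in\nu_pL=T_p\Sigma$ be the value at $p\in L\cap\Sigma$ of the parallel normal field extending $x'$. By equifocality the horizontal geodesics $\exp_p(tx)$ and $\exp_{p'}(tx')$ meet the same singular leaves at the same times, so the holonomy Jacobi families along them have equal index, which is finite because $x\in\mathcal{D}$; the equivalence of (5) with (2) in Lemma \ref{L:condition} then gives $R(\cdot,x')x'=0$, i.e.\ $x'$ lies in the kernel distribution of $\Sigma_{p'}$ and hence in the Euclidean factor. Once every section is known to lie in a Euclidean slice, the conclusion for \emph{all} leaves, singular ones included, follows in one line from the fact that the horizontal space of any leaf is spanned by the tangent spaces of the sections through the point (\cite{Alex1}, Theorem 2.4(a)); this also settles the singular-leaf issue you flagged at the end, which your closure-based sketch does not actually resolve. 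You should add the equifocality step (or an equivalent argument) before the open--closed argument, and replace the closure heuristic for singular leaves by the cited spanning property.
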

\begin{proof}
Write $M=\RR^n\times (G_c/K)$ where $G_c/K$ is a symmetric space of compact type. Let $\Sigma$ be a section. By Lemma \ref{L:properties-of-D}, the distribution $\mathcal{D}$ of $\Sigma$ is everywhere tangent to the Euclidean factor of $M$. The assumption that $\mathcal{D}=T\Sigma$ implies that $\Sigma$ is contained in the Euclidean factor. We now claim that in fact \emph{every} section is contained in the Euclidean factor. Let $\Sigma'$ denote any other section. Given a regular point $p'\in \Sigma'$, let $L$ denote the leaf through $p'$ and let $p\in L\cap \Sigma$. Given $x'\in T_{p'}\Sigma'$, let $X$ the corresponding parallel vector field along $L$ and let $x=X(p)$. By equifocality, the geodesic $\exp_{p'}tx'$ meets the same singular leaves as $\exp_ptx$ at the same times, and in particular the familes $\Lambda_h$, $\Lambda'_h$ of holonomy Jacobi fields along $\exp_{p}tx$, $\exp_{p'}tx'$ respectively, satisfy $\ind_{(-\infty,\infty)}\Lambda_h'=\ind_{(-\infty,\infty)}\Lambda_h$. Since $x\in \mathcal{D}$ by assumption, it follows from Lemma \ref{L:condition} that the latter index is finite, hence $\ind_{(-\infty,\infty)}\Lambda_h'<\infty$ and thus $x'\in \mathcal{D}$ as well. Since $x'$ was arbitrary, $T\Sigma'\subseteq \mathcal{D}$ and therefore $\Sigma'$ is contained in $\RR^n$ as well.

Since the horizontal space of every leaf (regular or singular) at a point $p$ is spanned by the tangent spaces of the sections through $p$ (cf. \cite{Alex1}, Theorem 2.4(a)) it follows that for every point $p=(p_e,p_c)\in \RR^n\times G_c/K$, the leaf through $p$ contains $\{p_e\}\times G_c/K$, and thus $(\RR^n\times G_c/K,\fol)=(\RR^n, \fol_e)\times G_c/K$ as wanted.
\end{proof}

\subsection{Splitting of hyperpolar foliations}

We can now prove:

\begin{proposition}\label{P:real-splitting}
Let $(M,\fol)$ be a hyperpolar foliation on a simply connected symmetric space with nonnegative curvature.
Then there is a splitting
{\[(M,\fol)=(M_1,\fol_1)\times (\RR^k,\fol_{\RR^k}),\]} where $(M_1,\fol_1)$ satisfies $\operatorname{tr}|_{\V_p}R(\cdot,x)x>0$ for any horizontal vector $x\in T_pM_1$.
\end{proposition}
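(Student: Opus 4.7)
The plan is to perform a case analysis on the distribution $\mathcal{D}$ introduced earlier in Section~\ref{sec:hyper}, leveraging the splitting results already established. Since Lemma~\ref{L:Weyl-splits} and Proposition~\ref{P:splittingw} assume that one is working with a type 2 factor, I would first apply the Decomposition Theorem~\ref{T:Decomposition} to write $(M,\fol) = (M_{-1},\fol_{-1})\times(M_0,\fol_0)$ with $(M_0,\fol_0)$ of type 2 (no type 3 factors appear because $(M,\fol)$ is hyperpolar). Because the sections of the type 1 factor must be flat for hyperpolarity, $(M_{-1},\fol_{-1})$ further decomposes as a product of a point foliation on some $\RR^a$ and a one-leaf foliation on some symmetric space $B$; the $\RR^a$ piece will be absorbed into the final Euclidean factor and $B$ into $M_1$.

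On the type 2 factor $(M_0,\fol_0)$, Lemma~\ref{L:properties-of-D} guarantees that $\mathcal{D}\subseteq T\Sigma_0$ is parallel, of constant rank, and tangent to the Euclidean factor of $M_0$. I would then split into three cases according to the rank of $\mathcal{D}$. If $\mathcal{D}=0$, Lemma~\ref{L:condition} already gives $\operatorname{tr}|_{\V_p}R(\cdot,x)x>0$ for every nonzero horizontal $x$, so setting $M_1=B\times M_0$ and $k=a$ concludes. If $\mathcal{D}=T\Sigma_0$, then Proposition~\ref{P:Disall} yields $(M_0,\fol_0) = (\RR^n,\fol_e)\times M_0'$ where $M_0'$ carries the one-leaf foliation; this makes $M_0'$ a type 1 factor of $(M_0,\fol_0)$, which is impossible unless $M_0'$ is a point, so $M_0=\RR^n$ and we take $M_1=B$, $k=a+n$. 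In the intermediate case $0\subsetneq \mathcal{D}\subsetneq T\Sigma_0$, Lemma~\ref{L:Weyl-splits} and Proposition~\ref{P:splittingw} produce a splitting $(M_0,\fol_0) = (N_1,\fol^{(1)})\times(N_2,\fol^{(2)})$ into type 2 factors whose sections are the integral leaf $\Sigma_1$ of $\mathcal{D}$ and its orthogonal complement $\Sigma_2$. Using that the curvature of a Riemannian product vanishes on mixed terms, one checks that $\mathcal{D}^{N_1}=T\Sigma_1$ and $\mathcal{D}^{N_2}=0$, so the two previous cases applied to $N_1$ and $N_2$ force $N_1=\RR^{k_1}$ and give the trace positivity on $N_2$. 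Setting $M_1=B\times N_2$ and $k=a+k_1$ completes this case.

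The trace condition on the composite $M_1$ is then straightforward to verify: since the foliation on the $B$-factor of $M_1$ has a single leaf, every nonzero horizontal vector in $M_1$ lies entirely in the $N_2$-component, and the product decomposition of the curvature tensor reduces $\operatorname{tr}|_{\V_p}R^{M_1}(\cdot,x)x$ to $\operatorname{tr}|_{\nu_{p_2}\Sigma_2}R^{N_2}(\cdot,x_2)x_2$, which is positive by $\mathcal{D}^{N_2}=0$ together with Lemma~\ref{L:condition}. The main obstacle I anticipate is the correct identification of the distributions $\mathcal{D}^{N_i}$ after the splitting; this rests on combining the maximality of $\Sigma_1$ as an integral submanifold of $\mathcal{D}$ inside $\Sigma_0$ with the product structure of the curvature tensor, so as to ensure that no further Euclidean factor remains hidden inside $N_2$ and the case analysis terminates after one step.
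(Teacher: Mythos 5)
Your overall strategy — reduce to a type 2 factor via the Decomposition Theorem, then split according to the rank of the distribution $\mathcal{D}$ using Lemma~\ref{L:Weyl-splits}, Proposition~\ref{P:splittingw}, Proposition~\ref{P:Disall}, and Lemma~\ref{L:condition} — matches the paper's proof, and your explicit three-way case analysis on $\mathcal{D}$ is arguably more careful than the paper's terse treatment (the paper implicitly covers the extreme cases by quoting Lemma~\ref{L:Weyl-splits}, which formally only applies when $\mathcal{D}$ is proper and nontrivial). The observation that the trivial factor contributes a flat section and hence decomposes into a point-foliated Euclidean piece and a one-leaf piece, and that $\mathcal{D}^{N_1}=T\Sigma_1$, $\mathcal{D}^{N_2}=0$ after the split, are both correct and in the right spirit.

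There is, however, a gap in the last step. You conclude $\operatorname{tr}|_{\nu_{p_2}\Sigma_2}R^{N_2}(\cdot,x_2)x_2>0$ from $\mathcal{D}^{N_2}=0$ and Lemma~\ref{L:condition}, but this only establishes the inequality for $p_2$ on the reference section $\Sigma_2$, whereas the proposition asserts $\operatorname{tr}|_{\V_p}R(\cdot,x)x>0$ at \emph{every} regular point $p\in M_1$. The distribution $\mathcal{D}$ is defined along a single immersed section $\Sigma_2$, and while Lemma~\ref{L:properties-of-D} makes it parallel within that section, nothing in your argument transports the positivity to regular points lying on other sections. The paper closes this exactly with equifocality together with the index characterization (condition (5)) of Lemma~\ref{L:condition}: given any regular $p\in N_2$ and horizontal $x\in T_pN_2$, pick the point $q\in\Sigma_2$ on the same leaf and the parallel normal vector $x'$ projecting to the same direction in $M/\fol$; by equifocality the horizontal geodesics from $p$ and $q$ meet the same singular leaves at the same times, so $\ind_{(-\infty,\infty)}\Lambda_h$ is infinite along both, and then Lemma~\ref{L:condition} gives $\operatorname{tr}|_{\V_p}R(\cdot,x)x>0$. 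You need this step, or an equivalent one, to make the final verification valid at all regular points rather than only on $\Sigma_2$.
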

\begin{proof}
Let $(M,\fol)$ be a polar foliation on a simply connected symmetric space with non-negative curvature. It is enough to prove the statement assuming that there are no trivial factors. By Lemma \ref{L:Weyl-splits}, the section $\Sigma$ splits as a product $\Sigma_1\times \Sigma_2$ where $\operatorname{tr}|_{\V_p}R(\cdot, x_1)x_1>0$ for every $x_1\in T_p\Sigma_1$ and $R(\cdot, x_2)x_2=0$ for every $x_2\in T_p\Sigma_2$. By Proposition \ref{P:splittingw}, this induces a splitting $(M,\fol)=(M_1,\fol_1)\times (M_2,\fol_2)$ where $(M_i,\fol_i)$ is a polar foliation with section $\Sigma_i$.

{Since the horizontal spaces of $M_2$ are contained in $\mathcal{D}$, it follows that $M_2=\RR^k$ is a Euclidean space by Proposition {\ref{P:Disall}}. Finally, since every $x_1\in \Sigma_1$ satisfies  $\operatorname{tr}|_{\V_p}R(\cdot, x_1)x_1>0$, it follows by Lemma {\ref{L:condition}} that every horizontal geodesic in $\Sigma_1$ meets singular leaves infinitely often. By equifocality, every horizontal vector in $M_1$ meets singular leaves infinitely often, hence $\operatorname{tr}|_{\V_p}R(\cdot, x)x>0$ for \emph{every} point $p\in M_1$ and horizontal vector $x\in T_pM_1$.}
\end{proof}

As corollaries of Proposition \ref{P:real-splitting} we get:
\begin{corollary}\label{C:splitting}
Let $(M,\fol)$ be a hyperpolar foliation with compact leaves on a simply connected symmetric space with nonnegative curvature. Then there is a splitting $(M,\fol)=(M_1,\fol_1)\times (\RR^k,\fol_{\RR^k})$, where $M_1$ is compact.
\end{corollary}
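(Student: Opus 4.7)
The plan is to upgrade Proposition~\ref{P:real-splitting} into compactness of $M_1$ using the compact-leaves hypothesis. Proposition~\ref{P:real-splitting} already provides the splitting $(M,\fol)=(M_1,\fol_1)\times(\RR^k,\fol_{\RR^k})$ with $(M_1,\fol_1)$ satisfying $\operatorname{tr}|_{\V_p}R(\cdot,x)x>0$ for every horizontal $x\in T_pM_1$, and since the leaves of $\fol$ are products of leaves of the two factors, compactness of leaves descends to $\fol_1$. Thus the only point left is that $M_1$ itself is compact.

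The first step is to feed the strict positivity of the curvature trace into Lemma~\ref{L:condition}(5), to conclude that along every horizontal geodesic in $M_1$ the space $\Lambda_h$ of holonomy Jacobi fields has $\ind_{(-\infty,\infty)}\Lambda_h=\infty$. Translating this into the Weyl-group picture on a section $\Sigma_1\cong\RR^n$ via Section~\ref{SS:properties}, where the singular set in $\Sigma_1$ is precisely the union of walls of the Weyl group $W_1$, this says that every affine line in $\Sigma_1$ crosses infinitely many walls of $W_1$. The second step is to argue that this forces the closed Weyl chambers $\bar C$ of $W_1$ in $\Sigma_1$ to be bounded: if $\bar C$ were unbounded, by discrete reflection-group theory its linear recession cone would contain a nonzero direction $v$ parallel to every wall of $W_1$, and then any affine line in the direction $v$ would meet no walls, contradicting the previous sentence. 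A bounded closed convex chamber is compact and is a fundamental domain for $W_1$, so $M_1/\fol_1\cong\Sigma_1/W_1$ is compact.

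The final step is to upgrade ``compact leaves plus compact leaf space'' to compactness of $M_1$. Fix a (compact) leaf $L_0\subset M_1$ and set $D=\diam(M_1/\fol_1)<\infty$. Since the leaf-space distance is the distance in $M_1$ between leaves, $\dist(p,L_0)\leq D$ for every $p\in M_1$, so $M_1$ lies in the closed $D$-tube around $L_0$, which is closed and bounded in the complete symmetric space $M_1$ and hence compact by Hopf--Rinow. I expect the main subtlety to be the middle step, in particular the reflection-group fact that chambers of $W_1$ are bounded once no direction is parallel to all walls; once that input is in hand, the translation from Jacobi fields to walls and the passage from compact leaf space to compact $M_1$ are quite direct.
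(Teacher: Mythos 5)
Your overall strategy is sound and the final conclusion is reached, but you take a more indirect route than the paper and there is an imprecision in the middle step that you should repair. The paper's own proof of this corollary is much shorter: write $M_1=M_c\times\RR^m$ for the compact and Euclidean factors of the symmetric space $M_1$, take any compact leaf $L$ and observe that the coordinate projection $\pi_e:L\to\RR^m$ cannot be a submersion if $m>0$ (a submersion from a compact manifold is open with compact image, which would have to be all of $\RR^m$). So there is a horizontal vector of the form $x=(0,x_e)$ with $x_e\in T\RR^m$, and then $R(\cdot,x)x=0$ violates the property $\operatorname{tr}|_{\V_p}R(\cdot,x)x>0$ guaranteed by Proposition~\ref{P:real-splitting}. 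You instead go through compactness of the leaf space (a statement the paper proves separately in Corollary~\ref{C:compact quotient}) and then a tube argument. That works, but it is genuinely longer; what it buys is that it makes the ``why'' more geometric (walls recur with bounded gaps), which is pleasant, but it is not needed here.

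The imprecision is in the chamber-boundedness step. Your claim that an unbounded closed chamber $\bar C$ of $W_1$ on $\Sigma_1\cong\RR^n$ must have a recession direction $v$ that is ``parallel to every wall of $W_1$'' is not correct as stated, and the stronger assertion that ``any affine line in direction $v$ meets no walls'' is overclaiming: a line starting in a different chamber $g\cdot C$ sees the image $g_*v$ of $v$ under the linear part of $g$, not $v$ itself, so lines elsewhere may well cross walls. What is true and sufficient is weaker: if $p$ is an interior point of a chamber $C$ and $v\ne 0$ lies in the recession cone of $\bar C$, then the ray $p+tv$, $t\ge 0$, stays in $\bar C$ (indeed in the open chamber by convexity), hence meets no wall. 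On the other hand, the Riccati comparison in the proof of Lemma~\ref{L:condition}~$(5\Rightarrow 4)$ shows that under $\operatorname{tr}|_{\V_t}R_t>n\delta>0$, singular times of $\Lambda_h$ recur with gaps at most $\pi/\sqrt{\delta}$; in particular $\ind_{[0,\infty)}\Lambda_h=\infty$, so the ray must cross infinitely many walls. That is the contradiction, and it does not need the direction $v$ to be parallel to all walls. With that replacement your step 2 is correct, and steps 1, 3, 4 are fine (step 4 correctly uses that $\pi:M_1\to M_1/\fol_1$ is a submetry, so $\dist(p,L_0)$ equals the leaf-space distance between $[p]$ and $[L_0]$).
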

\begin{proof}
By Proposition {\ref{P:real-splitting}} it is enough to prove that if $(M_1,\fol_1)$ has compact leaves and satisfies $\operatorname{tr}|_{\V_p}R(\cdot,x)x>0$ for any horizontal vector $x\in T_pM_1$, then $M_1$ is compact. Assume that $M_1=M_c\times\RR^m$ where $M_c$ is compact. Given a leaf $L\in \fol_1$, we prove by contradiction that the projection $\pi_e:L\to \RR^m$ is a submersion, thus proving that $m=0$. In fact, if not there exists a point $p = (p_c,p_e) \in L$ and $x_e \in T_p\RR^m$ perpendicular to $d_p\pi_e(T_pL)$. Then in particular $x = (0,x_e)\in T_{p_c}M_c\times T_{p_e}\RR^m$ is perpendicular to $L$, hence horizontal, but $R(\cdot,x)x=0$ contradicting the hypothesis.
\end{proof}

\begin{corollary}\label{C:compact quotient}
Given a hyperpolar foliation $(M,\fol)$ on a simply connected symmetric space with non-negative curvature, the leaf space $M/\fol$ is non-compact if and only if $(M,\fol)$ splits off a foliation $(\RR^k,\fol_{\RR^k})$ with more than one leaf.
\end{corollary}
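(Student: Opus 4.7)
The plan is to leverage Proposition \ref{P:real-splitting} to write $(M,\fol)=(M_1,\fol_1)\times(\RR^k,\fol_{\RR^k})$ with $(M_1,\fol_1)$ satisfying $\operatorname{tr}|_{\V_p}R(\cdot,x)x>0$ on every horizontal $x$. Since the leaf space of a product foliation is the product of leaf spaces, $M/\fol = M_1/\fol_1 \times \RR^k/\fol_{\RR^k}$, so it suffices to prove that $M_1/\fol_1$ is always compact while $\RR^k/\fol_{\RR^k}$ is compact exactly when $\fol_{\RR^k}$ has a single leaf.

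For the first claim, Lemma \ref{L:condition} shows every horizontal geodesic in $M_1$ meets infinitely many singular leaves, and the Riccati comparison argument in its proof even gives a uniform upper bound on the spacing between consecutive singular times. Describing $M_1/\fol_1$ as a quotient $\Sigma_1/W_1$ of a section $\Sigma_1\simeq\RR^{k_1}$ by its Weyl group, every line from a regular point in $\Sigma_1$ therefore crosses walls of $W_1$ at uniformly bounded intervals, which forces every Weyl chamber to have bounded diameter. Thus $M_1/\fol_1\simeq \Sigma_1/W_1$ is compact.

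For the second claim, if $\fol_{\RR^k}$ has only one leaf the quotient is a point. Otherwise a section $\Sigma_e\simeq\RR^{k'}$ of $\fol_{\RR^k}$ has dimension $k'\geq 1$, and since $\RR^k$ is flat, condition (3) of Lemma \ref{L:condition} is trivially satisfied, so the index of $\Lambda_h$ along any horizontal geodesic in $\Sigma_e$ is finite; that is, each such line crosses only finitely many walls of $W_e$. The main step I expect to require care is concluding that $W_e$ itself is finite: if $W_e$ had infinitely many walls, one would argue that either infinitely many are pairwise non-parallel — so a generic line crosses them all — or infinitely many fall in a single parallel family, crossed infinitely often by any transverse line, either way contradicting the finite-index bound. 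Once $W_e$ is finite it fixes a point in $\Sigma_e$, so a Weyl chamber is an unbounded cone and $\Sigma_e/W_e$ is non-compact. Combining the two claims proves the corollary, with the splitting from Proposition \ref{P:real-splitting} witnessing the ``only if'' direction and any splitting of the stated form witnessing the ``if'' direction.
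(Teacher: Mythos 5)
Your proposal follows the same top-level strategy as the paper: invoke Proposition~\ref{P:real-splitting} to split $(M,\fol)=(M_1,\fol_1)\times(\RR^k,\fol_{\RR^k})$, decompose the leaf space as a product, and analyze the two pieces. The details differ in two ways, both worth noting. For the compactness of $M_1/\fol_1$, you use the \emph{uniform} bound on consecutive singular times coming from the Riccati comparison in the proof of Lemma~\ref{L:condition}; this is correct but requires an additional (unstated) compactness argument on the unit horizontal sphere to pick a single $\delta$ that works for all directions. The paper instead observes that it suffices to know that every horizontal geodesic from a regular point meets the singular strata \emph{at least once}: since the Weyl chamber is a convex subset of $\Sigma_1\simeq\RR^{k_1}$, it then contains no rays and is therefore bounded, with no uniformity needed. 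For the $\RR^k$-factor, you argue directly that $W_e$ has finitely many walls (via the finite-index property coming from flatness of $\RR^k$, hence a finite reflection group with a fixed point, hence an unbounded chamber). This is a reasonable self-contained route, though one technicality deserves attention: Lemma~\ref{L:condition} as stated applies to factors of type 2, and $(\RR^k,\fol_{\RR^k})$ may carry trivial factors; what you actually need is only the Jacobi-field content of $(3)\Rightarrow(5)$, which does hold without that hypothesis, but this should be made explicit. The paper instead leaves this step implicit, effectively relying on Terng's structure theory for isoparametric foliations on $\RR^k$ (concentric spheres, cone quotient). Finally, the paper treats trivial factors as a separate case; you can avoid this because Proposition~\ref{P:real-splitting} is stated in the presence of trivial factors and absorbs them, but it is worth verifying, as you implicitly do, that point-foliated Euclidean factors land in the $\RR^k$ side while single-leaf factors land in $M_1$ without affecting compactness of $M_1/\fol_1$.
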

\begin{proof}
If there are no trivial factors in $(M,\fol)$, let $(M,\fol)=(M_1,\fol_1)\times (\RR^k,\fol_{\RR^k})$ be the splitting of $(M,\fol)$ from Proposition \ref{P:real-splitting}. Then $M/\fol=M_1/\fol_1 \times \RR^k/\fol_{\RR^k}$ and it is enough to prove that $M_1/\fol_1$ is compact. This is equivalent to prove that for any regular point $p\in M_1$ and any horizontal direction $x$ from $p$, the geodesic $\exp_p(tx)$ meets the singular strata at some point. However, since $\operatorname{tr}|_{T_pL_p}(R(\cdot, x)x)>0$ by definition of $M_1$, it follows by Lemma \ref{L:condition} that $\exp_p(tx)$ meets the singular stratum at least once (in fact, infinitely many times).

Finally, assume that $(M,\fol)$ splits off trivial factors:
\[
(M,\fol)=N\times (N',\{pts.\})\times (M',\fol'),
\]
where $N'$ is foliated by points, $\fol|_N$ is one leaf, and $(M',\fol')$ has no trivial factors. We can split $N'$ further as $N'=N'_c\times \RR^{k'}$ with $N'_c$ compact, and thus
\[
(M,\fol)=N\times (N'_c,\{pts.\})\times (\RR^{k'},\{pts.\})\times (M',\fol')\quad \Rightarrow \quad M/\fol=N'_c\times \RR^{k'}\times M'/\fol'
\]
Thus $M/\fol$ is non-compact if and only if either $k'>0$ or $M'/\fol'$ is non-compact. By the discussion above, it follows that either way $(M,\fol)$ splits off a Euclidean factor $(\RR^k, \fol_{\RR^k})$ with more than one leaf.
\end{proof}

\section{Factors of type 3: spherical polar foliations}
\label{sec:spherical}

In this section, we focus our attention to factors of type 3. That is, a polar foliation $(M,\fol)$ on a simply connected symmetric space $M$, with sections of constant positive curvature. The main goal of this section is to prove that factors of type 3 are compact and isoparametric, i.e. they have parallel mean curvature vector field.

We start by proving compactness.
\begin{proposition}\label{P:type3cpt}
Let $(M,\fol)$ be a factor of type 3. Then $M$ is compact.
\end{proposition}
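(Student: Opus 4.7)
The plan is to run the de Rham decomposition $M=\RR^k\times M'$, with $M'$ a simply connected non-negatively curved symmetric space of compact type (hence compact), and to show $k=0$ by deriving a trivial sub-factor $(\RR^k,\{\mathrm{pts}\})$ of $\fol$ whenever $k>0$. This would contradict the fact that $(M,\fol)$ is a pure factor of type 3 in Lytchak's decomposition (Theorem \ref{T:Decomposition}).

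The central geometric claim I would establish first is that every section $\Sigma$ of $\fol$ sits inside a single Euclidean slice $\{q\}\times M'$. Using the results of Section \ref{SS:properties}, I would pass to the universal cover and assume $\Sigma$ is simply connected; since $\Sigma$ is totally geodesic in $M$ with constant positive sectional curvature $c>0$, it is a round sphere $S^n(1/\sqrt{c})$. I would then compose the inclusion $\Sigma\hookrightarrow M$ with the projection $\pi_e:M\to\RR^k$. Because geodesics of the product $\RR^k\times M'$ split as $t\mapsto(tv+q,\gamma(t))$, this composition takes every geodesic of $\Sigma$ to an affine line in $\RR^k$, affinely parametrized. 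Great circles in $S^n$ are closed, while non-constant affine lines in $\RR^k$ are not, so the composition must be constant on every great circle, and, since great circles connect any two points of $S^n$, constant everywhere. Thus $\Sigma$ is contained in a single slice.

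The rest of the argument is structural. At every regular point $p=(p_e,p_c)$, the horizontal space coincides with $T_p\Sigma$ and by the previous step lies in $T_{p_c}M'$, hence is perpendicular to the Euclidean direction $T_{p_e}\RR^k$. So the parallel distribution $V_e$ tangent to the $\RR^k$-factor sits inside the vertical (leaf-tangent) distribution at every regular point. Since $V_e$ is integrable with integral manifolds the $\RR^k$-slices, and regular leaves are closed submanifolds of $M$ in the simply-connected setting (Section \ref{SS:properties}), each regular leaf is invariant under $\RR^k$-translations and splits as $\RR^k\times L'$ for some $L'\subseteq M'$. This would give $(M,\fol)=(\RR^k,\{\mathrm{pts}\})\times (M',\fol')$, a trivial sub-factor inside a pure type 3 factor, forcing $k=0$ and $M=M'$ compact.

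The step I expect to be the main obstacle is this last one: promoting the pointwise inclusion $V_e\subseteq TL$ at regular points into a genuine product splitting of the whole foliation, with care taken for singular leaves and for the fact that sections are a priori only immersed. Integrating $V_e$ along closed regular leaves handles the regular stratum, while propagating the splitting across singular leaves should follow either from closedness and density of regular leaves, or by invoking a variant of Proposition \ref{P:splittingw} adapted to the situation where the section does not split but a parallel distribution on $M$ is vertical.
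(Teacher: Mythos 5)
Your proof is correct in outline and takes a genuinely different route from the paper. The paper's argument is a three-line citation chase: the leaf space $\Sigma/W$ has positive curvature, hence $(M,\fol)$ is indecomposable; by Corollary 6.5 of Lytchak's polar-foliations paper an indecomposable polar foliation has a single Wilking dual leaf, namely $M$ itself; and by Lemma 4.1 there dual leaves are compact. Your route replaces this black-box machinery with an elementary geometric observation about periodic geodesics: after lifting, the section $\Sigma$ is a round sphere, its great circles are \emph{closed} geodesics of $M$, and since geodesics in the de Rham product $M=\RR^k\times M'$ project to affine lines in $\RR^k$, a closed geodesic must have zero Euclidean component. This forces all sections into $M'$-slices, makes the Euclidean distribution $V_e$ vertical, and forces a split factor $(\RR^k,\text{one leaf})$, contradicting indecomposability of a type 3 factor unless $k=0$. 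What the paper's approach buys is brevity, at the cost of relying on the dual foliation formalism; what your approach buys is self-containedness and a direct link to the "round section $\Rightarrow$ periodicity" phenomenon that drives the rest of Section 4 (Lemmas 4.3--4.7). Two things to tighten: (i) the trivial factor you produce is $(\RR^k,\text{one leaf})$, i.e.\ fibers of $\RR^k\to\{\mathrm{pt}\}$, not $(\RR^k,\{\mathrm{pts.}\})$ in the paper's notational sense where every point is a leaf, so the notation should be adjusted even though the factor is still trivial; and (ii) the promotion from "$V_e$ is tangent to regular leaves" to a genuine foliated splitting, which you correctly flag as the delicate step, can be handled by observing that the flow of $V_e$ preserves regular leaves (a regular leaf is closed so the flow cannot hit a singular point without producing a singular point on the regular leaf, a contradiction), hence the translations $\Phi_v(q_e,q_c)=(q_e+v,q_c)$ act trivially on the regular part of $M/\fol$, and by density of the regular part and Hausdorffness of $M/\fol$ they act trivially on all of $M/\fol$, i.e.\ preserve every leaf.
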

\begin{proof}
Since the leaf space has positive curvature, it is not a metric product and thus $(M, \fol)$ is indecomposable. By Corollary 6.5 in \cite{Lytchak}, such a foliation only has one dual leaf, which has to be $M$ itself. By Lemma 4.1 in \cite{Lytchak}, dual leaves have to be compact, so $M$ is compact. 
\end{proof}

Rescale the metric so that sections have positive sectional curvature 1, and we will consider the section as the (possibly non injectively) immersed round sphere. In particular, all horizontal geodesics are closed with common (not necessarily smallest) period $2\pi$, and the end-point map $\phi_{tX}$ has period $2\pi$ whenever $X$ is a parallel normal vector of unit length along a regular leaf.

Finally, since $M$ is a symmetric space of non-negative curvature, along any horizontal geodesic $\gamma(t)$ the eigenvalues of $R_t$ are constant and non-negative, and we call them $0=\lambda_0\leq\lambda_1^2\leq\ldots\leq\lambda_n^2$.

\begin{lemma}\label{L:integers}
Given a factor $(M,\fol)$ of type 3, with metric rescaled so that the section $\Sigma$ has sectional curvature $1$. Then fixing a regular point $p$ in $\Sigma$ and a unit-speed horizontal geodesic $\gamma(t)=\exp_ptx$, the eigenvalues of the curvature operator $R_t$ along $\gamma$ are squares of integers. Furthermore, the kernel of $R_t$ is contained in the kernel of the shape operator $S_{\gamma'(t)}$.
\end{lemma}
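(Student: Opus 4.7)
The plan is to combine the parallel decomposition of $\V$ into eigenspaces of $R_t$ with the $2\pi$-periodicity of every holonomy Jacobi field coming from the periodicity of the end-point map.

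First, since $M$ is symmetric, $\nabla R = 0$ and therefore $R_t$ is parallel along $\gamma$. Its eigenspaces $E_{\lambda^2}\subseteq \V$ are then parallel sub-bundles giving an orthogonal splitting $\V = \bigoplus_\lambda E_{\lambda^2}$ with constant eigenvalues $\lambda^2\geq 0$. Consequently the Jacobi equation $J''+R_tJ=0$ decouples: writing $J=\sum_\lambda J_\lambda$ with $J_\lambda$ a section of $E_{\lambda^2}$, each component satisfies $J_\lambda''+\lambda^2 J_\lambda=0$ in a parallel framing. The solutions are linear in $t$ when $\lambda=0$, and linear combinations of $\cos(\lambda t),\sin(\lambda t)$ when $\lambda>0$.

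Next, from $\phi_{(t+2\pi)X}=\phi_{tX}$ it follows that every holonomy Jacobi field $J_v(t)=d_p\phi_{tX}(v)$ is $2\pi$-periodic, and since the eigenspace decomposition is parallel, each component $(J_v)_\lambda$ is itself $2\pi$-periodic. For a positive eigenvalue $\lambda^2$, I pick a nonzero $v\in E_{\lambda^2}(0)$; the component $(J_v)_\lambda$ has initial value $v\neq 0$, so it is a nontrivial $2\pi$-periodic solution of $J_\lambda''+\lambda^2 J_\lambda=0$, which forces $\lambda\in\ZZ_{>0}$. Hence every eigenvalue of $R_t$ is the square of an integer.

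For the inclusion $\ker R_t\subseteq \ker S_{\gamma'(t)}$, I isolate the $\lambda=0$ component of a holonomy Jacobi field $J_v$: in a parallel frame of $E_0=\ker R_0$, the equation $J_0''=0$ yields $(J_v)_0(t)=\pi_0 v+t\,\pi_0 S_0 v$, where $\pi_0$ is orthogonal projection onto $E_0$. The $2\pi$-periodicity forces $\pi_0 S_0 v=0$ for every $v\in \V_0$, that is $\operatorname{Im}(S_0)\subseteq E_0^\perp$. Since $S_0$ is self-adjoint, this gives $\ker S_0=\operatorname{Im}(S_0)^\perp\supseteq E_0=\ker R_0$. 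Applying the same argument to reparametrized geodesics extends the inclusion to every regular time $t$.

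The main obstacle I expect is bookkeeping: verifying that the eigenspace splitting is genuinely parallel (which uses $\nabla R=0$), that the Jacobi equation decouples consistently along this splitting, and that the periodicity of $\phi_{tX}$ transfers to the individual eigenspace components of every holonomy Jacobi field. None of these is deep once one remembers that $M$ is symmetric and that sections have been rescaled to have curvature $1$.
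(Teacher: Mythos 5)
Your proof is correct and follows essentially the same route as the paper's: decompose $\V$ into the parallel eigenbundles of $R_t$, use the $2\pi$-periodicity of $\phi_{tX}$ to make each eigenspace component of a holonomy Jacobi field $2\pi$-periodic, read off that every positive eigenvalue $\lambda$ must be an integer, and use the vanishing of the linear-in-$t$ term in the $\V^0$ component (together with self-adjointness of the shape operator) to get $\ker R_t\subseteq\ker S_{\gamma'(t)}$. The only superficial difference is that the paper reads the last inclusion off directly from the explicit form of $J(t)$ for all $t$, whereas you argue at $t=0$ and reparametrize.
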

\begin{proof}
Recall that a holonomy Jacobi field $J(t)$ along $\gamma$ is given by $d_p\phi_{tX}(v)$ for some $v\in \mathcal{V}_p$. In particular, since $\phi_{tX}$ is periodic with period $2\pi$, so is any holonomy Jacobi field.

For any eigenvalue $\lambda^2$ of $R_t$, let $\V^\lambda_t\subseteq \V_t$ be the corresponding eigenspace. Since $R_t$ is parallel, so are the sub-bundles $\V^\lambda$. Furthermore, since $R_{2\pi}=R_0$, then $\V^\lambda_{2\pi}=\V^\lambda_0$ and in particular the projection $\pi_{\V^\lambda}J$ of a holonomy Jacobi field $J$ onto a subspace $\V^\lambda$ is again a periodic Jacobi field.

Recall however that $\pi_{\V^\lambda}J(t)=\sin(\lambda t)V(t)+\cos(\lambda t)W(t)$ for some parallel vector fields $V(t), W(t)\in \V^\lambda_t$. Since $\pi_{\V^\lambda}J$ is periodic with period $2\pi$, it follows that each $\lambda$ is an integer.

Finally, since holonomy Jacobi fields are periodic, they must be of the form
\[
J(t)=V_0(t)+\sum_i\left(\sin(\lambda_i t)V_i(t)+\cos(\lambda_i t) W_i(t)\right)
\]
for $V_0(t)$ parallel and tangent to $\V^0$, and $V_i(t), W_i(t)$ parallel and tangent to $\V^{\lambda_i}$. In particular $S_tJ=J'(t)\perp \V^0$, i.e. $S_t|_{\V^0}=0$.
\end{proof}

\begin{lemma}\label{L:sum}
Let $\Lambda_{h}$ be the space of holonomy Jacobi fields along a horizontal geodesic $\gamma$, and let $m=\ind_{[0,2\pi)}\Lambda_{h}$. This index does not depend on the choice of initial regular point $p$ or horizontal geodesic $\gamma$, and it equals $2\sum_i \lambda_i$.
\end{lemma}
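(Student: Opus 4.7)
The plan is to decompose $\Lambda_h$ along the parallel eigenspace decomposition of $R_t$, compute the index of each piece explicitly from the sinusoidal form of the Jacobi fields, and deduce independence of $(p,\gamma)$ from integrality. By Lemma~\ref{L:integers}, $R_t$ has constant integer-square eigenvalues along $\gamma$; since $R$ is parallel, each eigenbundle $\V^{\lambda_i}\subseteq \V$ is itself parallel, and the Jacobi equation preserves $\V^{\lambda_i}$. Hence every Jacobi field splits as $J=\sum_i J_i$ with $J_i$ a Jacobi field in $\V^{\lambda_i}$, and the projections $\pi_i:\Lambda_h\to \{\text{Jacobi fields in }\V^{\lambda_i}\}$ have isotropic image. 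Setting $m_i=\dim \V^{\lambda_i}$, isotropy gives $\dim \pi_i\Lambda_h\leq m_i$, while injectivity of $J\mapsto(\pi_iJ)_i$ combined with $\dim\Lambda_h=\dim\V=\sum m_i$ forces each $\pi_i\Lambda_h$ to be Lagrangian, and yields $\ind_{[0,2\pi)}\Lambda_h=\sum_i\ind_{[0,2\pi)}\pi_i\Lambda_h$.

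Next I would compute each summand using the explicit form of the Jacobi fields. For $\lambda_0=0$, Jacobi fields in $\V^0$ are $J(t)=V(t)+tW(t)$ with $V,W$ parallel; periodicity of holonomy Jacobi fields (the end-point maps $\phi_{tX}$ are $2\pi$-periodic, cf.\ the proof of Lemma~\ref{L:integers}) forces $W=0$, so $\pi_0\Lambda_h$ consists of nowhere-vanishing parallel fields and contributes $0$. For $\lambda_i>0$, Jacobi fields are $J(t)=\cos(\lambda_i t)A+\lambda_i^{-1}\sin(\lambda_i t)B$ for parallel $A,B$, and I identify $\pi_i\Lambda_h$ with a Lagrangian $\{(Pv,Qv):v\in\V^{\lambda_i}_0\}$ (with $P^TQ$ symmetric) via $(J(0),J'(0))$. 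Singular times correspond to zeros of $\det(\cos(\lambda_i t)P+\lambda_i^{-1}\sin(\lambda_i t)Q)$ weighted by kernel dimension. Since this determinant is $\pi$-antiperiodic in $\lambda_i t$, its zero set repeats with period $\pi$, and the symmetric matrix pencil $(P,\lambda_i^{-1}Q)$ has exactly $m_i$ generalized eigenvalues in $\RR\cup\{\infty\}$ counted with multiplicity, one for each $\theta\in[0,\pi)$ with $\tan\theta$ equal to that eigenvalue. So each $\pi$-period in $\lambda_i t$ contributes $m_i$, and the total over $t\in[0,2\pi)$ is $2\lambda_i m_i$. Summing, $\ind_{[0,2\pi)}\Lambda_h=\sum_i 2\lambda_i m_i = 2\sum_j \lambda_j$, the last sum running over eigenvalues listed with multiplicity.

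For independence of the initial data, the quantity $\sum_j \lambda_j = \operatorname{tr}\sqrt{R_{p,x}}$ is a continuous function of the regular unit horizontal vector $(p,x)$ but takes only integer values by Lemma~\ref{L:integers}, hence is locally constant. The set of regular horizontal unit vectors is connected: the regular locus $M^{\mathrm{reg}}$ is open and dense in the connected manifold $M$, and at each regular $p$ the unit sphere of $T_p\Sigma_p$ is either connected ($\dim\Sigma\geq 2$) or two antipodal points yielding the same unparametrized geodesic ($\dim\Sigma=1$). Hence the index is globally constant.

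The main obstacle I anticipate is making the symmetric-pencil count in the second paragraph completely rigorous for non-generic Lagrangians, i.e.\ when $P$ is singular so that ``eigenvalues at $\infty$'' appear. The cleanest remedy is to perturb to a transverse Lagrangian and invoke integrality of the index together with homotopy invariance of the Maslov index of the closed loop $\Phi_t\pi_i\Lambda_h$, $t\in[0,2\pi\lambda_i^{-1}]$, in the Lagrangian Grassmannian (using that Jacobi-flow crossings all have the same sign, so the unsigned index in the paper's definition agrees with the topologically invariant Maslov count); alternatively, one may appeal directly to the Weierstrass--Kronecker canonical form for pairs of symmetric bilinear forms, which produces $m_i$ generalized eigenvalues in $\RR\cup\{\infty\}$ regardless of whether $P$ is invertible, with ``$\infty$'' eigenvalues contributing at $\theta=0$.
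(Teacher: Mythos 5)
Your decomposition strategy founders at the very first step. You claim that the projections $\pi_i\colon\Lambda_h\to\{\text{Jacobi fields in }\V^{\lambda_i}\}$ have isotropic image, but this is not automatic for a Lagrangian family. Writing $\omega(J_1,J_2)=\langle J_1',J_2\rangle-\langle J_1,J_2'\rangle$ and $P_i$ for the orthogonal projection onto $\V^{\lambda_i}_0$, one has $\omega(J_1,J_2)=\sum_i\omega(\pi_iJ_1,\pi_iJ_2)$ with each summand constant in $t$; vanishing of the sum does not force each summand to vanish. For holonomy Jacobi fields $J_v$ (with $J_v(0)=v$, $J_v'(0)=S_0 v$) a direct computation gives $\omega(\pi_iJ_{v_1},\pi_iJ_{v_2})=\langle P_iS_0v_1,P_iv_2\rangle-\langle P_iv_1,P_iS_0v_2\rangle=\langle v_1,[S_0,P_i]v_2\rangle$, which vanishes for all $v_1,v_2$ precisely when $S_0$ commutes with $P_i$, i.e.\ when the Riccati operator preserves the eigenspaces of $R_0$. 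This simultaneous diagonalizability is a genuine structural fact that is nowhere established in the paper --- Lemma~\ref{L:integers} yields only $\ker R_t\subseteq\ker S_t$, saying nothing about the positive eigenspaces --- and without it the bound $\dim\pi_i\Lambda_h\le m_i$, the claimed splitting $\Lambda_h\cong\bigoplus_i\pi_i\Lambda_h$, and the additivity of the index all collapse. The obstacle you flagged (rigor of the symmetric-pencil count when $P$ is singular) is real but secondary; the primary gap is the unjustified isotropy of the projected families.

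The paper's proof sidesteps this entirely by a comparison argument. It introduces the family $\Lambda_0=\{J\mid J(0)=0,\ J'(0)\in\V_0\}$, which is Lagrangian by construction and whose members are explicitly $J_i(t)=\sin(\lambda_it)E_i(t)$ in an eigenbasis, so that $\ind_{[0,2k\pi)}\Lambda_0=2k\sum_i\lambda_i$ reads off immediately with no decomposition of $\Lambda_h$ required. Proposition~1.4 of \cite{LytJacobi} then gives the universal estimate $|\ind_I\Lambda_h-\ind_I\Lambda_0|\le\dim\V$ for every interval $I$; over $[0,2k\pi)$ this yields $k\,|m-2\sum_i\lambda_i|\le n$ for all $k$, forcing $m=2\sum_i\lambda_i$. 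Your argument for independence of $(p,\gamma)$ via continuity and integrality of $\sum_j\lambda_j$ is essentially sound and parallel in spirit to the paper's (which uses continuity of the index under deformation of closed horizontal geodesics), but the main identity needs the comparison device, not a direct eigenbundle decomposition.
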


\begin{proof}
The independence on the choice of geodesic, or on the regular point $p$, follows from the continuity of the index for Lagrangian spaces of Jacobi fields proved in Proposition 1.4 of \cite{LytJacobi}, and the fact that in this case, any two horizontal closed geodesics of the same length can be connected via a path of horizontal closed geodesics of constant length (because the sections are round spheres).

For the second statement,  fix a horizontal geodesic $\gamma$ and notice that for any integer $k$, $\ind_{[0,2\pi k)}\Lambda_{h}=mk$. We now consider a different Lagrangian space of Jacobi fields in $\mathcal{V}$ along $\gamma$, namely
\[
\Lambda_0=\{J\mid J(0)=0, J'(0)\in \mathcal{V}_{\gamma(0)}\}.
\]
Let $\{e_1,\ldots e_n\}\subseteq \V_0$ be a basis of eigenvalues of $R_0$. Notice that any Jacobi field $J_i\in \Lambda_0$ with $J_i(0)=0$, $J_i'(0)=e_i$ is given by $J_i(t)=\sin(\lambda_i t)E_i(t)$ (with $E_i(t)$ the parallel extension of $e_i$), which vanishes $2\lambda_i$ times for every period $2\pi$ of the geodesic. Since $\{J_1(t),\ldots J_n(t)\}$ are linearly independent whenever they are all nonzero, it follows that:
\[
\ind_{[0,2\pi)}\Lambda_0=2\sum_i\lambda_i\qquad\Rightarrow\qquad \ind_{[0,2k\pi)}\Lambda_0=2k\sum_i\lambda_i .
\]
On the other hand, it follows by Proposition 1.4 of \cite{LytJacobi} that given two Lagrangian spaces of Jacobi fields $\Lambda_1,\Lambda_2$ along $\mathcal{V}$, then for \emph{any} interval $I$ one has:
\[
|\ind_I\Lambda_1-\ind_I\Lambda_2|\leq \dim \mathcal{V} .
\]
Applying this to the case of $\Lambda_h$ and $\Lambda_0$, one has that for any positive integer $k$, $k|m-2\sum_i\lambda_i|<n$. The only way this can be true for all $k$ is that $m-2\sum_i\lambda_i=0$.
\end{proof}

\begin{remark}
It follows from the previous lemma that $m$ is even, but this should not be a surprise. Consider, in fact, the section $\Sigma=S^n$, cut by the walls $w_i$ fixed by reflections in the Weyl group. Given a horizontal geodesic $\gamma$ along $\Sigma$, the singular times for the family $\Lambda_h$ of holonomy Jacobi fields along $\gamma$, i.e. the times in which $\dim\Lambda_h(t_i)<\dim\Lambda_h$ (cf. Section \ref{SS:Lagrangian}), coincide with the times in which $\gamma(t_i)$ meets a wall $w_i$ of $\Sigma$. Furthermore, the multiplicity $m_i= \dim\Lambda_h-\dim\Lambda_h(t_i)$ corresponds to the multiplicity of the wall $w_i$. Since $\gamma$ meets each wall $w_i$ \emph{twice}, at times $t_i$ and $t_i+\pi$, each wall contributes $2m_i$ to the index $\ind_{[0,2\pi)}\Lambda_h$.
\end{remark}
\begin{corollary}\label{L:cond-satisf}
Given a horizontal geodesic $\gamma$, the curvature operator $R_t$ satisfies $\operatorname{tr}|_{\V_t}R_t>0$.
\end{corollary}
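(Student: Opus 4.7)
The plan is to apply Lemma~\ref{L:sum} to reduce $\operatorname{tr}|_{\V_t}R_t>0$ to the positivity of the global index $m=\ind_{[0,2\pi)}\Lambda_h$, and then to rule out $m=0$ by showing that it would force $(M,\fol)$ to split off a trivial factor, contradicting its being pure type 3. By Lemma~\ref{L:sum}, $m=2\sum_i\lambda_i$ where the $\lambda_i^2\geq 0$ are the eigenvalues of $R_t$ counted with multiplicity, so $\operatorname{tr}|_{\V_t}R_t=\sum_i\lambda_i^2$ vanishes exactly when $m=0$; because $m$ does not depend on the horizontal geodesic, vanishing along one geodesic forces vanishing along all, and the whole problem reduces to deriving a contradiction from $m=0$.

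Assuming $m=0$, the Jacobi equation along any horizontal geodesic degenerates to $J''=0$; since holonomy Jacobi fields are $2\pi$-periodic (sections are round spheres), they must all be constants, so $J'(0)=0$, which means the shape operator of the principal leaf $L_p$ vanishes in every horizontal direction. Thus $L_p$ is totally geodesic and $\V_p=T_pL_p$ is a Lie triple system. Writing $M=G/H$ with Cartan decomposition $\g=\h\oplus\m$ and setting $\mathfrak{a}:=T_p\Sigma$, the symmetric-space formula $R(v,x)x=[[v,x],x]$ together with the $\mathrm{ad}$-invariance of the bi-invariant metric on $\g$ force $[v,x]=0$ for all $v\in \V_p$, $x\in \mathfrak{a}$, so $[\V_p,\mathfrak{a}]=0$ in $\g$.

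A Jacobi-identity iteration then shows that $\g_\V:=\V_p+[\V_p,\V_p]$ and $\g_\mathfrak{a}:=\mathfrak{a}+[\mathfrak{a},\mathfrak{a}]$ are commuting Lie subalgebras of $\g$. Since by Proposition~\ref{P:type3cpt} $M$ is compact simply connected with no Euclidean factor, the effective $\g$ is semisimple with trivial center; combined with $\g_\V+\g_\mathfrak{a}\supseteq\m+[\m,\m]=\g$, this yields $\g=\g_\V\oplus\g_\mathfrak{a}$. The induced splitting $M=M_\V\times M_\mathfrak{a}$ then exhibits $\fol$ as the foliation by fibers of the projection $M\to M_\mathfrak{a}$, i.e.\ a type 1 factor in the sense of Theorem~\ref{T:Decomposition}, contradicting $(M,\fol)$ being pure type 3. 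The main obstacle I expect is the Lie algebra step: verifying the commutator structure carefully, handling the possibly non-effective part of $\h$, and making sure the product splitting of $M$ is compatible with the polar foliation $\fol$.
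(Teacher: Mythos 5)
Your reduction to showing $m=\ind_{[0,2\pi)}\Lambda_h>0$ via Lemma~\ref{L:sum} matches the paper exactly, but from there the two arguments diverge. The paper simply invokes Lemma~2.3 of Lytchak's \emph{Polar foliations of symmetric spaces}, which says that a polar foliation on a simply connected non-negatively curved symmetric space must contain singular leaves unless it is hyperpolar; since a type 3 factor is not hyperpolar, there is a singular leaf, hence a horizontal geodesic $\gamma$ with $\ind_{[0,2\pi)}\Lambda_h>0$, and by Lemma~\ref{L:sum} the index is the same for all horizontal geodesics. Done in two lines.

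You instead re-derive the existence of singular leaves from scratch by a Lie-algebraic splitting argument: if $m=0$ then $R_t\equiv0$ and the holonomy Jacobi fields are constant (by $2\pi$-periodicity), so the shape operator vanishes; then $[\V_p,\mathfrak{a}]=0$, the subalgebras $\g_\V$ and $\g_{\mathfrak{a}}$ commute, and one obtains an ideal decomposition $\g=\g_\V\oplus\g_{\mathfrak{a}}$ and a product $M=M_\V\times M_{\mathfrak{a}}$ with $\fol$ the trivial foliation by fibers. This is essentially the same mechanism the paper uses in Lemma~\ref{L:properties-of-D} and Proposition~\ref{P:Disall} for \emph{hyperpolar} factors (where the trace-free direction ends up central because it also commutes with $\mathfrak{a}$, forcing a Euclidean factor); your variant has to work harder because in the spherical case $[\mathfrak{a},\mathfrak{a}]\neq0$ so you need the full two-sided decomposition. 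The computations are correct modulo the effectiveness bookkeeping you flag, and they buy a more self-contained proof that doesn't lean on Lytchak's Lemma~2.3.

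The one place that needs tightening is the final punch line ``contradicting $(M,\fol)$ being pure type 3.'' A trivial foliation $M_\V\times M_{\mathfrak{a}}\to M_{\mathfrak{a}}$ with $M_{\mathfrak{a}}$ a rank one compact symmetric space is, literally, a polar foliation whose section has constant positive curvature, so ``type 3'' as stated in Theorem~\ref{T:Decomposition} does not formally exclude it; the convention that the factors $(M_i,\fol_i)$ have no trivial sub-factors, or the indecomposability argument in Proposition~\ref{P:type3cpt} (positive curvature of the leaf space $\Rightarrow$ no metric splitting), has to be spelled out for the contradiction to stick. Alternatively, you could short-circuit your whole Lie algebra step by observing that $m=0$ forces $\Lambda_h$ to have no singular times along any horizontal geodesic, hence no singular leaves at all, and then cite Lytchak's Lemma~2.3 as the paper does. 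Either route works; the paper's is shorter, yours is more elementary.
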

\begin{proof}
It follows by Lemma 2.3 of \cite{Lytchak} that a polar foliation on a simply connected, non-negatively curved symmetric space must contain singular leaves, unless the foliation is of type 2. Therefore we have at least one singular leaf $L$, and there is at least one horizontal closed geodesic $\gamma$ through $L$, which must satisfy $\ind_{[0,2\pi)} \Lambda_h=:m>0$.

By Lemma \ref{L:sum}, this index is the same for \emph{any} horizontal closed geodesic in the section. Since $\sum_i \lambda_i={m\over 2}>0$ by the previous lemma, then $\operatorname{tr}|_{\V_t}R_t=\sum_i\lambda_i^2>0$.
\end{proof}

\begin{lemma}\label{L:sumi}
Fix a factor $(M,\fol)$ of type 3, with the metric normalized as above. Then for any horizontal geodesic $\gamma(t)$, the function $\det(d\phi_{tX})$ can be written as a linear combination
\[
f(t)=\sum_i a_i\sin(s_it)+b_i\cos(s_it),
\]
where $s_i$ are integers of the same parity between $-{m\over 2}$ and $m\over 2$, where ${m\over 2}=\ind_{[0,\pi)}\Lambda_h$.
\end{lemma}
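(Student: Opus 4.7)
The plan is to combine the explicit description of the holonomy Jacobi fields from Lemma \ref{L:integers} with a direct expansion of the determinant in a parallel frame adapted to the eigenspaces of $R_t$. Pick an orthonormal parallel frame $E_1(t),\ldots,E_n(t)$ of $\V$ along $\gamma$ with each $E_j$ lying in a single eigenbundle $\V^{\lambda_j}$; this is possible because $R$ is parallel, so each eigenbundle $\V^\lambda$ is a parallel sub-bundle of $\V$. Let $J_1,\ldots,J_n$ denote the holonomy Jacobi fields with $J_i(0)=E_i(0)$ and set $A_{ij}(t)=\langle J_i(t),E_j(t)\rangle$, so that $\det(d\phi_{tX})=\det A(t)$ (at regular times $\V_t=T_{\gamma(t)}L_t$, and this is the usual Jacobian determinant).

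By Lemma \ref{L:integers} every holonomy Jacobi field has the form
\[
J_i(t)=V_0^{(i)}(t)+\sum_{\lambda>0}\bigl(\sin(\lambda t)\,V_\lambda^{(i)}(t)+\cos(\lambda t)\,W_\lambda^{(i)}(t)\bigr),
\]
with $V_0^{(i)}\in\V^0$ and $V_\lambda^{(i)},W_\lambda^{(i)}\in\V^\lambda$ all parallel. Since distinct eigenbundles are mutually orthogonal, the entry $A_{ij}(t)$ is a constant when $E_j\in\V^0$ and is of the form $A_{ij}(t)=b_{ij}\sin(\lambda_j t)+c_{ij}\cos(\lambda_j t)$ when $\lambda_j>0$. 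In particular, every entry in the $j$-th column of $A(t)$ is a trigonometric polynomial supported on the single frequency $\pm\lambda_j$ (including the case $\lambda_j=0$).

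Expanding $\det A(t)=\sum_\sigma \operatorname{sgn}(\sigma)\prod_j A_{\sigma(j),j}(t)$ and rewriting each nonconstant factor as $p^{+}_{ij}e^{i\lambda_j t}+p^{-}_{ij}e^{-i\lambda_j t}$, each permutation term becomes a linear combination of exponentials $e^{ist}$ with $s=\sum_{j:\lambda_j>0}\epsilon_j\lambda_j$ for some signs $\epsilon_j\in\{-1,+1\}$. Pairing conjugate exponentials yields the desired $\sin/\cos$ decomposition of $\det A(t)$, and it remains to verify the frequency bounds. By the triangle inequality $|s|\leq \sum_{j:\lambda_j>0}\lambda_j=\sum_i\lambda_i$, and this last quantity equals $m/2=\ind_{[0,\pi)}\Lambda_h$ by Lemma \ref{L:sum} together with the remark that each wall of $\Sigma$ is crossed twice per period $2\pi$. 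For the parity, flipping any single $\epsilon_j$ changes $s$ by the even integer $\pm 2\lambda_j$ (using $\lambda_j\in\ZZ$ from Lemma \ref{L:integers}), so every frequency appearing in $\det A(t)$ is congruent to $m/2\pmod{2}$.

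I expect no serious obstacle here: the only genuine idea is the initial choice of a frame adapted to the parallel eigenbundles of $R_t$, which forces each column of $A(t)$ to be pure in frequency. Once that is done, the remainder is an algebraic bookkeeping of frequencies via complex exponentials, with the parity statement reducing to the elementary observation that a sum $\sum\epsilon_j\lambda_j$ has fixed parity modulo $2$ under sign flips.
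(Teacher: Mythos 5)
Your proof is correct and follows essentially the same route as the paper's: both choose a parallel orthonormal frame adapted to the eigenbundles of $R_t$, use the explicit single-frequency form of the holonomy Jacobi fields guaranteed by Lemma \ref{L:integers} to make each column of the Jacobian matrix a pure trigonometric polynomial in $\lambda_j t$, and then read off the frequencies $s=\sum\epsilon_j\lambda_j$ (the paper via product-to-sum formulas, you via complex exponentials — a purely cosmetic difference). The only detail you gloss over that the paper makes explicit is that the parallel frame stays oriented (normal holonomy of the simply connected section lies in $\SO(n)$), so that $\det(\langle J_i,E_j\rangle)$ is a well-defined global function equal to $\det(d\phi_{tX})$.
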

\begin{proof}
As usual let $\mathcal{V}_t$ be the bundle along $\gamma$ perpendicular to $\Sigma$. Let $e_1\ldots e_n$ be an orthonormal basis of eigenvectors of $R_0$, with eigenvalues $\lambda_i^2$, and let $E_i(t)$ be the parallel extension of $e_i$ along $\gamma$.  Assume that $\lambda_i=0$ for $i=1,\ldots r$ and $\lambda_i>0$ for $i=r+1,\ldots n$. Let $\Lambda_h$ be the family of holonomy Jacobi fields along $\gamma$, with a basis $J_1,\ldots J_n$ with
\[
J_i(0)=e_i,\qquad J_i'(0)=\sum_jb_{ij}e_j.
\]
Since $M$ is a symmetric space, $R_t$ is parallel and the Jacobi fields can be explicitly computed as
\[
J_i(t)=\sum_{j=1}^r\left(\delta_{ij}+b_{ij}t\right)E_j(t) + \sum_{j=r+1}^n\left(\delta_{ij}\cos\lambda_jt+{b_{ij}\over \lambda_j}\sin\lambda_jt\right)E_j(t).
\]
From Lemma \ref{L:integers} one has $J_i(t)=E_i(t)$ for $i=1,\ldots r$, and for $i=r+1,\ldots n$
\[
J_i(t)=\sum_{j=r+1}^n\left(\delta_{ij}\cos\lambda_jt+{b_{ij}\over \lambda_j}\sin\lambda_jt\right)E_j(t) .
\]

Since $\Sigma=S^n$ is simply connected, the normal holonomy of $\Sigma$ is contained in $\SO(n)$ and in particular $E_1(t),\ldots E_n(t)$ represent at each point an oriented orthonormal basis of $\V_t$. Hence, it makes sense to define $f(t):=\det(d_p\phi_{tX})=\det(\langle J_i,E_j\rangle)$, which is given by a linear combination of terms of the form
\begin{equation}\label{E:functions}
\prod_{i=r+1}^{n}\operatorname{sc}_i(\lambda_it),\qquad \operatorname{sc}_i\in \{\sin,\cos\} .
\end{equation}

Using the product formulas for trigonometric functions, it follows that $f(t)$ is a linear combination
\[
f(t)=\sum_i a_i\sin(s_it)+b_i\cos(s_it),
\]
where each $s_i$ is a linear combination $s_i=\epsilon_{r+1}\lambda_{r+1}+\ldots +\epsilon_n \lambda_n$ with coefficients $\epsilon_r,\ldots \epsilon_n\in \{\pm 1\}$. In particular, $s_i$ are integers, bounded between $\sum\lambda_i={m\over 2}$ and $-{m/2}$. All $s_i$ have the same parity, since their difference is a linear combination of the $\lambda_i$'s with coefficients in $\{-2,0,2\}$.
\end{proof}

\begin{proposition}\label{P:type3-gen-isoparametric}
Let $(M,\fol)$ be a factor of type 3. Then the mean curvature is basic.
\end{proposition}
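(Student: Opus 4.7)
The plan is to show that for any parallel normal vector field $X$ along a principal leaf $L_0$, the function $p\mapsto \langle H(p), X(p)\rangle$ is constant along $L_0$, which is precisely the statement that the mean curvature is basic. For each $p\in L_0$, I will study the determinant of the end-point map along the horizontal geodesic $\gamma_p(t) = \exp_p(tX(p))$,
\[
f_p(t) := \det\bigl(d_p\phi_{tX}\bigr).
\]
Using the description from Section~\ref{SS:holonomy} of holonomy Jacobi fields $J_v = d_p\phi_{tX}(v)$ with $J_v(0)=v$ and $J_v'(0) = S_{X(p)}v$, one obtains $f_p(0) = 1$ and $f_p'(0) = \operatorname{tr}(S_{X(p)}) = -\langle H(p), X(p)\rangle$ (since $S_{X(p)}$ is the negative of the shape operator). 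It therefore suffices to show that $f_p'(0)$ is independent of $p$; in fact I will prove the stronger statement that the entire function $f_p(t)$ is independent of $p\in L_0$.

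The strategy relies on a rigidity principle for trigonometric polynomials attaining the maximum possible number of zeros. By Lemma~\ref{L:sumi}, each $f_p$ is a real trigonometric polynomial of degree at most $m/2$, where $m = \ind_{[0,2\pi)}\Lambda_h$ is independent of $p\in L_0$ by Lemma~\ref{L:sum}. On the other hand, the zeros of $f_p$ on $[0,2\pi)$ occur precisely at the focal times of $\Lambda_h$ along $\gamma_p$, and the order of vanishing at each focal time equals the corresponding focal multiplicity; summing yields exactly $m$ zeros counted with multiplicity. By equifocality (since $X$ is parallel), this zero set, together with its multiplicities, does not depend on $p$. Setting $u = e^{it}$, we can write $f_p(t) = u^{-m/2}P_p(u)$ where $P_p$ is a polynomial of degree at most $m$; the $m$ zeros of $f_p$ on $S^1$ force $P_p$ to have degree exactly $m$ with all roots on $S^1$, so two such polynomials sharing the same zero set must be scalar multiples of one another. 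Therefore $f_p = c(p)\, g$ for some fixed trigonometric polynomial $g$, and the normalization $f_p(0) = 1$ forces $c(p) = 1/g(0)$ to be constant. In particular $f_p'(0)$, and hence $\langle H, X\rangle$, is constant along $L_0$.

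The main obstacle is justifying the independence of the zero set of $f_p$, with multiplicities, as $p$ varies over $L_0$. The invariance of the focal \emph{times} follows from the standard equifocality of horizontal geodesics associated with a parallel normal field (already used, e.g., in the proof of Proposition~\ref{P:Disall}); the matching of vanishing orders with focal multiplicities is a standard property of Lagrangian families of Jacobi fields as in Section~\ref{SS:Lagrangian}. Once this is in place, the counting argument identifies $f_p$ as an extremal trigonometric polynomial, which by the rigidity principle above is then rigid up to a scalar.
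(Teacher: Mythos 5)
Your proof is correct and follows essentially the same strategy as the paper: show that $f_p(t)=\det(d_p\phi_{tX})$ depends only on $p$-independent data (Lemma~\ref{L:sumi} for the degree bound, equifocality and Lemma~\ref{L:sum} for the zeros and their total count), and conclude from $f_p(0)=1$, $f_p'(0)=-\langle H,X\rangle$. The one place where you diverge is the uniqueness step: the paper pins down $f_p$ by exhibiting an explicit triangular basis of the $({m\over 2}+1)$-dimensional space $\mathcal{T}$ and verifying that the $m/2$ vanishing conditions are linearly independent, whereas you rewrite $f_p$ in terms of $u=e^{it}$ as $u^{-m/2}P_p(u)$ with $\deg P_p\leq m$ and note that $m$ zeros on the circle force $P_p$ to be degree exactly $m$, hence determined by its roots up to a scalar. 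Your polynomial-rigidity argument subsumes the linear independence check and also sidesteps the parity observation the paper uses to cut $\dim\mathcal{T}$ down to ${m\over 2}+1$; it is a cleaner way to run the same dimension count. One small point worth making explicit (which the paper also glosses over): the order of vanishing of $f_p$ at a focal time $t_j$ equals the multiplicity $m_j=\dim\Lambda_h-\dim\Lambda_h(t_j)$. This follows from the standard fact that for a Lagrangian family, the map $J\mapsto J'(t_j)$ is injective on $\{J\in\Lambda_h : J(t_j)=0\}$ and has image complementary to $\Lambda_h(t_j)$, so the Jacobi matrix factors as $(t-t_j)^{m_j}$ times a matrix invertible at $t_j$.
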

\begin{proof}
Fix a regular leaf $L_0$ and a basic horizontal vector field $X$ along $L_0$. For $p\in L_0$, let $\Sigma_p$ be the section through $p$, $\gamma_p(t)=\exp_ptX_p$, $\mathcal{V}_p=\nu \Sigma_p|_{\gamma_p}$, $\Lambda_p$ the space of holonomy Jacobi fields along $\gamma_p$, and $E_1(t),\ldots E_n(t)$ a frame of parallel vector fields along $\gamma_p$, tangent to $\mathcal{V}_p$. Finally, let $f_p(t)=\det(\langle J_i(t),E_j(t)\rangle)$.

Once again, we normalize the metric so that the section is a round sphere of curvature 1. By Lemma \ref{L:integers} the eigenvalues $\lambda_i^2$ of $R_t$ are squares of integers, and by Lemma \ref{L:sum},  $\sum_i\lambda_i={m\over 2}=\ind_{[0,\pi)}\Lambda_{L_0}$. Furthermore, by Lemma \ref{L:sumi},
\[
f_p(t)=\sum_i a_i\sin(s_it)+b_i\cos(s_it),\qquad s_i=\lambda_1\pm \lambda_2\ldots \pm \lambda_n
\]
where $s_i$ can range within integers of the same parity from $-\sum_i \lambda_i=-{m\over 2}$ to ${m\over 2}$. Taking into account that $\cos(s_it)=\cos(-s_it)$, $\sin(-s_it)=-\sin(s_it)$ and $\sin(0)=0$, it follows that depending on the parity of ${m\over 2}$, the functions $\sin(s_it),\cos(s_it)$ are contained in the space $\mathcal{T}$ of functions:
\[
\left\{\begin{array}{ll}
\mathcal{T}=\operatorname{span}\{1, \cos(2t), \sin(2t), \cos(4t), \sin(4t),\ldots, \cos({m\over 2}t), \sin({m\over 2}t)\}&\textrm{if ${m\over 2}$ is even}\\
\mathcal{T}=\operatorname{span}\{\cos(t), \sin(t), \cos(3t), \sin(3t),\ldots, \cos({m\over 2}t), \sin({m\over 2}t)\}&\textrm{if ${m\over 2}$ is odd}
\end{array}
\right.
\]
in either case of dimension ${m\over 2}+1$, which does not depend on $p$. The projection of $\gamma_p$ to $M/\fol$ will intersect singular strata at singular times $t_1,\ldots t_k\in (0,\pi)$, and for each $j=1,\ldots k$ we can let $m_j:=\dim L_0-\dim L_{t_j}=\dim \Lambda_p-\Lambda_p(t_j)$. Notice that, by the equifocality of singular Riemannian foliations (cf. Proposition 4.3 of \cite{LT},  Theorem 2.9 in \cite{Ale10} or Proposition 2.26 of \cite{radeschinotes}) the data $t_j$, $m_j$ do not depend on the choice of point $p\in L_0$ but only on the choice of basic vector field $X$: in fact, if we chose a different point $q$ and let $\gamma_q(t)=\exp_q(tX_q)$ then $\gamma_p$ and $\gamma_q$ would meet the same leaf at each time $t$.

The fact that $\gamma_p(t)$ meets the singular leaves $L_{t_j}$ with $\dim\Lambda_p-\dim\Lambda_p(t_j)=m_j$ can be restated by saying that $f_p(t)$ vanishes with order $m_j$. This imposes, for every singular time $t_j$, exactly $m_j$ conditions:
\[
f_p(t_j)=f_p'(t_j)=\ldots=f_p^{(m_j-1)}(t_j)=0.
\]

These conditions form a system of $\sum_{j=1}^km_j={m\over 2}$ linear equations on $\mathcal{T}$, which are easily seen to be linearly independent:
in fact, consider the subspace $\mathcal{T}'\subseteq \mathcal{T}$ spanned by the $m\over 2$ linearly independent functions:
\[
{\cos^l(t-t_j)\over \sin^l(t-t_j)}\prod_{i=1}^{k}\sin^{m_i}(t-t_i),\qquad j=1,\ldots k, l=1,\ldots m_j.
\]
 Then the linear map $\mathcal{T}\to \RR^{m\over 2}$ which sends a function $h(t)\in \mathcal{T}$ to
 \[
 (h^{(l)}(t_j))_{l=0,\ldots,m_j-1,\,j=1,\ldots k}
 \]
 is invertible when restricted to $\mathcal{T}'$ (as the matrix for this map is triangular with non-zero diagonals with respect to that basis). Hence the kernel of this map has dimension 1, and $f_p$ is the unique function in the kernel satisfying $f_p(0)=1$.

In particular, $f_p(t)$ is uniquely determined by $X$ from information on the leaf space $M/\fol$, and it is independent of $p$. Since
\[
f_p'(0)=f_p(0)\cdot \operatorname{tr}{S_{\gamma_p'(0)}}=-\langle H_p, X_p\rangle,
\]
it follows in particular that the inner product of $H$ with any basic horizontal vector field $X$ along $L_0$ is constant. Thus, $H$ is basic as well.
\end{proof}

\begin{remark}
By Theorem 1.2 in \cite{Lytchak}, a polar foliation whose section has constant positive sectional curvature, is either a foliation of codimension 2, or a polar foliation on a sphere or projective space $(S^n,\fol)$, $(\CP^n,\fol)$, $(\HP^n,\fol)$. In the latter case, the foliation can always be lifted to a polar foliation on a round sphere $(S^n,\fol)$, where the fact that the mean curvature vector field is basic was proved by Alexandrino and the second author in \cite{Alexandrino-Radeschi}. In particular, the new information of Theorem \ref{T:parallel-H} applies to polar foliations with 2-dimensional round section.
\end{remark}

\begin{remark}
Even though the mean curvature vector field of factor of type 3 is basic, the second fundamental form of leaves need not be constant in any natural sense, even in the case of isoparametric hypersurfaces in $\CP^n$, cf. \cite{Park}.
\end{remark}

We end this section with a proof of Theorems \ref{T:parallel-H} (that is, polar foliations on symmetric spaces with non negative curvature are isoparametric) and \ref{T:splitting} (polar foliations with compact leaves on symmetric spaces with nonnegative curvature split into a compact factor and an Euclidean one).

\begin{proof}[Proof of Theorem \ref{T:parallel-H}]
We check that this is true on every factor of the foliation.

This is trivially true for factors of type 1. For factors $(M_0,\fol_0)$ of type 2 (hyperpolar foliations) this fact  follows from
Theorems 2.4 and 6.5 in \cite{HeintzeLiuOlmos}. Finally, we proved that factors of type 3 are isoparametric in Proposition \ref{P:type3-gen-isoparametric}.
\end{proof}

\begin{proof}[Proof of Theorem \ref{T:splitting}]
Let $(M,\fol)=(M_{-1},\fol_{-1})\times (M_0,\fol_0)\times \prod_i (M_i,\fol_i)$ be Lytchak's decomposition (cf. Theorem \ref{T:Decomposition}). The result follows because each factor decomposes accordingly: For the trivial foliation $(M_{-1},\fol_{-1})$ the result is obvious. For the type 2 factor $(M_0,\fol_0)$ the result follows by Corollary \ref{C:splitting}. For each type 3 factor $(M_i,\fol_i)$ the result follows by Proposition \ref{P:type3cpt}.
\end{proof}

The following is a stronger version of Theorem \ref{T:splitting}:

\begin{theorem}\label{T:total-splitting}
Let $(M, \fol)$ be a polar foliation on a simply connected symmetric space with non-negative curvature. Then there is a unique splitting $(M,\fol)=(M_c, \fol_c)\times (M_e,\fol_e)$ where $M_c/\fol_c$ is compact, $M_e$ is a Euclidean space, and $(M_e,\fol_e)$ is an isoparametric foliation with compact leaves.
\end{theorem}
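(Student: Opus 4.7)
The plan is to assemble the desired decomposition from the splittings already proved. First I would apply Lytchak's Theorem~\ref{T:Decomposition} to write $(M,\fol) = (M_{-1},\fol_{-1}) \times (M_0,\fol_0) \times \prod_i (M_i,\fol_i)$, and then handle each factor separately. Each type 3 factor $(M_i,\fol_i)$ has $M_i$ compact by Proposition~\ref{P:type3cpt}, so $M_i/\fol_i$ is compact and these all contribute to $M_c$. For the type 2 factor, apply Proposition~\ref{P:real-splitting} to get $(M_0,\fol_0) = (M_0',\fol_0') \times (\RR^{k_0},\fol_{\RR^{k_0}})$ with $M_0'$ satisfying $\operatorname{tr}|_{\V_p}R(\cdot,x)x>0$ in every horizontal direction. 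By Lemma~\ref{L:condition}, every horizontal geodesic in $M_0'$ meets singular leaves infinitely often, so the Weyl group acts cocompactly on each flat section and $M_0'/\fol_0'$ is a compact orbifold; thus $(M_0',\fol_0')$ contributes to $M_c$.

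For the type 1 factor, write $M_{-1}=N\times N'$ with $\fol_{-1}$ given by the fibers of the projection to $N$, and apply the de Rham decomposition $N = N_c \times \RR^{k_N}$ of the simply connected symmetric space $N$ to obtain a product splitting
\[
(M_{-1},\fol_{-1}) = (N_c \times N',\ \text{fibers over } N_c) \times (\RR^{k_N},\{\text{pts}\}).
\]
The first factor has compact leaf space $N_c$ and contributes to $M_c$; the second is Euclidean with (compact) point leaves and contributes to $M_e$. For the residual Euclidean factor $(\RR^{k_0},\fol_{\RR^{k_0}})$ coming from the type 2 factor, Theorem~\ref{T:parallel-H} ensures it is isoparametric, and the classification of isoparametric foliations on Euclidean space (cf.\ \cite{Thorbergsson,Chi}) provides a further splitting $(\RR^{k_0},\fol_{\RR^{k_0}}) = (\RR^a,\fol_a) \times (\RR^b,\{\RR^b\})$ in which $\fol_a$ has compact leaves and the second factor is the trivial single-leaf foliation. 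The first piece joins $M_e$, while the second (whose leaf space is a point) joins $M_c$.

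Collecting the pieces gives $(M,\fol) = (M_c,\fol_c) \times (M_e,\fol_e)$ with the properties stated. Uniqueness follows from the uniqueness of the de Rham decomposition of $M$ as a symmetric space, combined with the fact that $M_e$ is characterized intrinsically as the maximal Euclidean direct factor of $M$ on which $\fol$ restricts to an isoparametric foliation with compact leaves. The main obstacle I anticipate is the bookkeeping of the several Euclidean pieces, and in particular verifying that the residual Euclidean factor produced by Proposition~\ref{P:real-splitting} really does admit the classification-style further splitting into a compact-leaf isoparametric part and a one-leaf trivial part---this is standard once Theorem~\ref{T:parallel-H} is available to guarantee isoparametricity.
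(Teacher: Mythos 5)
Your proposal is correct and follows essentially the same route as the paper: apply Lytchak's decomposition, send the type~3 factors and the positive-trace part of the type~2 factor to $M_c$ (using Proposition~\ref{P:type3cpt} and Corollary~\ref{C:compact quotient}), and handle the trivial factor by splitting its point-foliated part via de~Rham into compact and Euclidean pieces while keeping the one-leaf part in $M_c$. The only inessential difference is your further splitting of $(\RR^{k_0},\fol_{\RR^{k_0}})$ into a compact-leaf part and a single-leaf trivial part; this step is vacuous, since that factor inherits the absence of trivial factors from $(M_0,\fol_0)$, and Terng's Theorems 1.18 and 1.20 then give compact leaves directly, which is what the paper invokes.
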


\begin{proof}
Again let $(M,\fol)=(M_{-1},\fol_{-1})\times (M_0,\fol_0)\times \prod_i (M_i,\fol_i)$ be Lytchak's decomposition (cf. Theorem \ref{T:Decomposition}). Let
\begin{enumerate}
\item $(M_{-1},\fol_-)=M^a_{-1}\times (M_{-1}^b,\{pts\})$ be the splitting of $M_{-1}$ into the factor foliated by points, and the factor with one leaf only. Furthermore, split $M_{-1}^b=M_{-1}^c\times M_{-1}^e$ with $M_{-1}^c$ compact and $M_{-1}^e$ Euclidean.
\item $(M_0,\fol_0)=(M_0^c,\fol_0^c)\times(M_0^e,\fol_0^e)$ be the splitting of $\fol_0$ from Proposition \ref{P:real-splitting}.
\item $(M^c,\fol^c)$ be the product foliation $(M^c,\fol^c)=M_{-1}^a\times (M_{-1}^c,\{pts.\})\times (M_0^c,\fol_0^c)\times \prod_i(M_i,\fol_i)$.
\item $(M^e,\fol^e)=(M_{-1}^e,\{pts.\})\times (M_0^e,\fol_0^e)$.
\end{enumerate}
Then $(M,\fol)$ splits as a product $(M,\fol)=(M^c,\fol^c)\times (M^e,\fol^e)$, where:
\begin{itemize}
\item $M^c/\fol^c=M_{-1}^c\times M_0^c/\fol_0^c\times \prod_i M_i/\fol_i$ is compact by Corollary \ref{C:compact quotient}, and the fact that $M_i/\fol_i$ are spherical quotients.
\item $(M_0^e,\fol_0^e)$ is an isoparametric foliation in Euclidean space without trivial factors, and thus its leaves are compact. Therefore, $(M^e,\fol^e)=(M_{-1}^e,\{pts.\})\times (M_0^e,\fol_0^e)$ is an isoparametric foliation in a Euclidean space, with compact leaves.
\end{itemize}
\end{proof}

\section{Minimal isoparametric leaves and mean curvature flow}
\label{sec:MCF}

\subsection{Minimal isoparametric leaves in positive Ricci curvature}
The following is a generalization of a well-known result for families of parallel hypersurfaces in spaces with positive Ricci curvature {}{(cf. {\cite{VerdianiZiller}} for results that generalize this to intermediate Ricci curvature):}

\begin{proposition}\label{P:volume-concavity}
Let $(M,\fol)$ be a polar foliation of dimension $n$ with compact leaves on a simply connected manifold $M$, with projection $\pi:M\to M/\fol$. Assume that for any principal leaf $L$ and any $x\in\nu_pL$, the curvature operator $R$ on $M$ satisfies $\operatorname{tr}_{T_pL} R(\cdot, x)x>0$. {}{Let $\operatorname{vol}$ denote the $n$-dimensional volume}. Then the function
\[
V:M/\fol\to \RR\qquad V(p_*)=\operatorname{vol}\big(\pi^{-1}(p_*)\big)^{1\over n}
\]
is strictly concave on the regular part of $M/\fol$, and equal to $0$ on the singular part. In particular, if $M/\fol$ is compact, there is a unique leaf achieving the maximum volume.
\end{proposition}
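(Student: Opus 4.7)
The plan is to test strict concavity of $V$ along every geodesic of the regular part of the leaf space. Each such geodesic lifts to a horizontal geodesic $\gamma(t)=\exp_p(tx)$ from a regular point $p$ in a section, with $x\in T_p\Sigma$. I would extend $x$ to the $\fol$-parallel normal vector field $X$ along $L_0=L_p$ and consider the endpoint map $\phi_t\colon L_0\to L_t$, $\phi_t(q)=\exp_q(tX_q)$. Then $\operatorname{vol}(L_t)$ can be written (up to the degree of $\phi_t$, which is locally constant along regular values of $t$) as $\int_{L_0}J(q,t)\,d\mu_0(q)$, where the Jacobian $J(q,t)=|\det d_q\phi_t|$ equals $\det A_q(t)$ for the $n\times n$ matrix $A_q(t)$ of holonomy Jacobi fields along $\gamma_q(s)=\exp_q(sX_q)$, expressed in a parallel orthonormal frame of $\V_q=\nu_{\gamma_q}\Sigma_q$.

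Writing $A_q'(t)=S_q(t)A_q(t)$ with $S_q$ the Riccati operator from Section~\ref{SS:Lagrangian}, satisfying $S_q'+S_q^2+R_q=0$, a direct computation yields
\[
(\log J)'=\operatorname{tr} S_q,\qquad (\log J)''=-\operatorname{tr} S_q^2-\operatorname{tr} R_q,
\]
and hence
\[
(J^{1/n})''=\frac{J^{1/n}}{n^2}\bigl[(\operatorname{tr} S_q)^2-n\operatorname{tr} S_q^2\bigr]-\frac{J^{1/n}}{n}\operatorname{tr} R_q.
\]
The bracket is $\leq 0$ by Cauchy--Schwarz applied to the eigenvalues of $S_q$, and $\operatorname{tr} R_q>0$ by the hypothesis (applied to the horizontal vector $\gamma_q'(t)\in\nu_{\gamma_q(t)}L_t$). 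So $J(q,t)^{1/n}$ is strictly concave in $t$, pointwise in $q$.

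To upgrade this to strict concavity of $V(t)=\operatorname{vol}(L_t)^{1/n}$ itself I would use that the mean curvature vector field of the regular leaves is basic (which, for polar foliations, is the isoparametric condition and follows, in the applications, from Theorem~\ref{T:parallel-H}). This makes the function $h(t):=\operatorname{tr} S_q(t)=(\log J(q,\cdot))'(t)$ independent of $q$; solving $(\log J)'=h$ with initial condition $J(q,0)=1$ then yields $J(q,t)=J(t)$ independent of $q$, so $\operatorname{vol}(L_t)=\operatorname{vol}(L_0)\cdot J(t)$ and $V$ inherits strict concavity from $J^{1/n}$. On the singular part $V=0$ since singular leaves have dimension strictly less than $n$ and thus zero $n$-dimensional volume. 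If $M/\fol$ is compact, then $V$ attains its maximum necessarily in the regular part (where $V>0$); if it were attained at two distinct points $p_*,q_*$, the geodesic in $M/\fol$ joining them would either stay in the regular part and produce a contradiction by strict concavity, or it would meet the boundary where $V=0$, violating $V\geq\max$ along the geodesic by concavity.

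The main obstacle of this approach is the integration step upgrading pointwise concavity of $J(q,t)^{1/n}$ in $t$ to concavity of $V(t)$: Minkowski's inequality runs the wrong way here, so basic mean curvature (i.e. the isoparametric condition) is exactly what is needed to collapse the $q$-dependence of the Jacobian and reduce everything to the already established one-variable calculation.
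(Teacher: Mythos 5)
Your proof is correct and takes essentially the same route as the paper: express $\operatorname{vol}(L_t)$ via the Jacobian $J(q,t)=\det(d_q\phi_{tX})$ of the endpoint map, use the Riccati equation $(\log J)'=\operatorname{tr} S$, $S'+S^2+R=0$, together with Cauchy--Schwarz on eigenvalues and the hypothesis $\operatorname{tr} R>0$ to get pointwise strict concavity of $J(q,\cdot)^{1/n}$, and then pass from the pointwise bound to concavity of $V$. The paper's version of the calculus is a Riccati comparison against the model $\bar s(t)=-\tan(t-t_0)$, giving $J(q,t)\le \cos^n(t-t_0)/\cos^n(t_0)$, rather than your direct second-derivative identity $(J^{1/n})''=\frac{J^{1/n}}{n^2}\bigl[(\operatorname{tr} S)^2-n\operatorname{tr} S^2\bigr]-\frac{J^{1/n}}{n}\operatorname{tr} R$, but these are equivalent.

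The one place where you add value is exactly the step you flag. You are right that Minkowski's inequality runs the wrong way: pointwise concavity of $J(q,\cdot)^{1/n}$ does not by itself give concavity of $\left(\int_{L_0}J(q,t)\,\omega_0\right)^{1/n}$, and one can check that the failure is governed by the variance of $\frac{1}{n}\operatorname{tr}S_q(0)$ over $q\in L_0$. The paper's own proof pulls a single comparison factor $\cos(t-t_0)/\cos(t_0)$ out of the integral over $L_0$, where $t_0=\arctan\bigl(\tfrac{1}{n}\operatorname{tr}S_0\bigr)$; this is only legitimate if $\operatorname{tr}S_0$, i.e.\ the inner product of the mean curvature with the basic normal field, is constant along $L_0$ --- precisely the basic-mean-curvature condition you invoke. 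So your explicit appeal to the isoparametric condition is exactly what is needed to close the argument, and it is implicitly what the paper does (the proposition is only applied after Theorem~\ref{T:parallel-H} has supplied this, and Theorem~\ref{T:MCF} assumes it outright). The boundary behaviour and the uniqueness of the maximizer via convexity of the interior of $M/\fol$ are handled the same way in both proofs.
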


\begin{proof}
Let $\gamma_*:[-a,b]\to M/\fol$ a geodesic segment on the regular part of $M/\fol$. It is enough to prove that $V(\gamma_*(t))$ is strictly concave.

Let $L_t=\pi^{-1}(\gamma_*(t))$, and let $X$ the horizontal parallel vector field along $L_0$ projecting to $\gamma_*'(0)$, and let $\phi_{tX}:L_0\times[-a,b]\to M$ the end-point map defined in Section \ref{SS:holonomy}. Then for every $p\in L_0$, $\gamma_p(t)=\phi_{tX}(p)$ is a horizontal geodesic in $M$ projecting to $\gamma_*$. Letting $\omega_t$ be the volume form of $L_t$, one has that $\phi_{tX}^*\omega_t(p)=f(p,t)\omega_0$ for some function $f$. Here the function $f(p,t)$ can be computed as
\[
f(p,t)=\det (d_p\phi_tX)=\det(\langle J_i(t),E_j(t)\rangle)
\]
where $J_i(t)$ are holonomy Jacobi fields, $E_i(t)$ are parallel vector fields, and $J_i(0)=E_i(0)=e_i$ is an oriented, orthonormal basis of $T_pL_0$. Then
\[
V(\gamma_*(t))=\operatorname{vol}(L_t)^{1\over n}=\left(\int_{L_t}\omega_t\right)^{1\over n}=\left(\int_{L_0}f(p,t)\omega_0\right)^{1\over n}  .
\]

Fixing a $p\in L_0$, recall from Section \ref{SS:holonomy} that the holonomy Jacobi fields form a Lagrangian space $\Lambda_p$ of Jacobi fields of the bundle $\V$ given by $\V_t=T_{\gamma_p(t)}L_t$. In particular there is a Riccati operator $S\in \Sym^2(\V^*)$ along $\gamma_p(t)$ such that $S_tJ_i(t)=J_i'(t)$, which solves the ODE $S_t'+S_t^2+R_t=0$. Since by assumption $\operatorname{tr}|_{\V}R_t>0$, let $\delta>0$ be such that along $\gamma_p$, $\operatorname{tr}|_{\V}R_t>n\delta$. {Up to rescaling the metric on $M$, we can assume that $\delta=1$ and $\operatorname{tr}|_{\V}R_t>n$.}

By comparison theory of the Riccati operator, letting $s_0={1\over n}\operatorname{tr}(S_0)$, one has that $s(t):={1\over n}\operatorname{tr}(S_t)$ is bounded above by the solution $\bar{s}(t)$ of the ODE
\[
\left\{\begin{array}{l}
\bar{s}'(t)+\bar{s}^2(t)+1=0\\
\bar{s}(0)=s_0
\end{array}
\right.
\]
that is, $\bar{s}(t)=-\tan(t-t_0)$, where $t_0=\arctan(s_0)$.
Finally, $f(p,t)$ satisfies ${d\over dt}(\ln f(p,t))=\operatorname{tr}(S_t)\leq-n\tan(t-t_0)$. Hence for any $t>0$
\[
\ln\left({f(p,t)\over f(p,0)}\right)\leq \int_0^t-n\tan(t-t_0)dt=\ln \left({\cos^n(t-t_0)\over \cos^n(t_0)}\right) .
\]
Since $f(p,0)=1$,
\[
f(p,t)\leq{\cos^n(t-t_0)\over \cos^n(t_0)},\qquad t>0.
\]
For negative values of $t$, we can repeat the same argument for $\hat{\gamma}(t):=\gamma(-t)$. In this case, $\operatorname{tr}(S_{\hat{\gamma}'(0)})=-\operatorname{tr}(S_{{\gamma}'(0)})=-s_0$, and one can apply the comparison theory to obtain ${1\over n}\operatorname{tr}(S_{\hat{\gamma}(t)})\leq \hat{{s}}(t)$ where $\hat{{s}}(t)$ now solves
\[
\left\{\begin{array}{l}
\hat{{s}}'(t)+\hat{{s}}^2(t)+1=0\\
\hat{{s}}(0)=-s_0
\end{array}
\right.
\]
that is, $\hat{{s}}(t)=-\tan(t+t_0)$. Now, $\hat{f}(p,t):=f(p,-t)$ solves the ODE
\[
{d\over dt}(\ln \hat{f}(p,t))=\operatorname{tr}(S_{\hat{\gamma}(t)})\leq-n\tan(t+t_0)
\]
and again since $\hat{f}(p,0)=1$, one obtains for any $t>0$
\[
\hat{f}(p,t)\leq{\cos^n(t+t_0)\over \cos^n(-t_0)} .
\]
Substituting $\hat{f}(p,t)=f(p,-t)$ one gets, now for negative values of $t$, that
\[
{f}(p,t)\leq\left({\cos(-t+t_0)\over \cos(-t_0)}\right)^n=\left({\cos(t-t_0)\over \cos(t_0)}\right)^n
\]
Therefore, the same inequality for $f(p,t)$ applies to both sides of $t=0$. In particular, we have
\begin{align*}
V(\gamma_*(t))=&\left(\int_{L_0}f(p,t)\omega_0\right)^{1/n}\leq {\cos(t-t_0)\over \cos(t_0)}\left(\int_{L_0}\omega_0\right)^{1/n}\\
=&{\cos(t-t_0)\over \cos(t_0)}V(\gamma_*(0))
\end{align*}
with equality at $t=0$. In particular,
\[
{d^2\over dt^2}\Big|_{t=0}V(\gamma_*(t))\leq {d^2\over dt^2}\Big|_{t=0}\left({\cos(t-t_0)\over \cos(t_0)}V(\gamma_*(0))\right)=- V(\gamma_*(0))<0.
\]

Hence $V$ is strictly concave in the interior of $M/\fol$. Since points on the boundary of $M/\fol$ corresponds to lower dimensional leaves, $V$ is $0$ on the boundary.  Moreover if $M/\fol$ is compact,  $V$ must have a maximum in the interior and this is the only critical point in the interior since  the interior of $M/\fol$ is convex (cf. Section \ref{SS:properties}).
\end{proof}

\begin{remark}\label{R:noncompact-case}
The setup in the statement of Proposition \ref{P:volume-concavity} can be extended to the case of noncompact leaves, as follows. Let $(M,\fol)$ be a polar foliation with non-compact leaves, such that for any principal leaf $L$ and any $x\in \nu_pL$ the curvature operator $R$ on $M$ satisfies $\operatorname{tr}|_{T_pL} R(\cdot,x)x > 0$. Given a relatively compact open neighbourhood $P\subseteq L$ of $p$, define:
\begin{enumerate}
\item $U_P\subset M$ the open set $U_P=\bigcup_{q\in P} C_q$, where $C_q$ is the open Weyl chamber through $q$.
\item $(U_P,\fol_{P})$ the foliation by the intersections $L'\cap U_P$, for $L'\in \fol$. 
\end{enumerate}
It is easy to see that the inclusion $U_P\to M$ induces an injection $U_P/\fol_P\to M/\fol$ which is a homeomorphism onto the interior of $M/\fol$, and $\fol_P$ is \emph{full} in the sense that for any leaf $L'\cap U_P$ in $\fol_P$ there exists some $\epsilon>0$ such that the normal exponential map of $L'\cap U_P$ is well defined in $U_P$ up to distance $\epsilon$. In particular, for any leaf $L'\cap U_P$ of $\fol_P$, and any parallel normal vector field $X$ along $L'\cap U_P$, The map $\phi_{tX}:L'\cap U_P\to U_P$ is well defined for all $t$ in some interval around $0$, and in this case it is a diffeomorphism onto a leaf of $\fol_P$. This was the crucial property used in Proposition \ref{P:volume-concavity}, and allows to prove that the function
\[
V_P:U_P/\fol_P\to \RR\qquad V_P(p_*)=\operatorname{vol}\big(\pi^{-1}(p_*)\cap U_P\big)^{1\over n}
\]
is strictly concave on $U_P/\fol_P$ and approaches zero {}{towards the boundary of its closure in $M/\fol$}. In particular,{}{if the closure of $U_P/\fol_P$, i.e. $M/\fol$, is compact}, there is a unique leaf achieving the maximum volume.
\end{remark}

\begin{proof}[Proof of Theorem \ref{T:MCF}]
Let $(M,\fol)$ be an isoparametric foliation as in Theorem \ref{T:MCF}, and notice that the condition $Ric_M(x)>Ric_\Sigma(x)$ for all $x\in T_pM$ tangent to $\Sigma$, is equivalent to $\operatorname{tr}|_{\V_p}R>0$.

Since by Theorem \ref{T:parallel-H} the mean curvature vector of the regular leaves of $(M,\fol)$ is parallel, it projects to a vector field $H_*$ on the regular part of $M/\fol$. Furthermore the mean curvature flow $f(t,\cdot)$ starting from a regular leaf $L=\pi^{-1}(p_*)$ of $\fol$ flows through regular leaves of $\fol$, and in fact $L_t:=f(t,L)=\pi^{-1}(\gamma_*(t))$ where $\gamma_*$ is the integral curve of $H_*$ with $\gamma_*(0)=p_*$.

If the leaves of $\fol$ are not compact, consider $(U_P,\fol|_{U_P})$ as in Remark \ref{R:noncompact-case}. We have the following properties:
\begin{enumerate}
\item Given a leaf $L'\in \fol$ and $L'\cap U_P$ the corresponding leaf in $\fol_P$, clearly the mean curvature vector field of $L'\cap U_P$ equals the mean curvature vector field of $L'$, restricted to $L'\cap U_P$. In particular, the mean curvature vector field of $(U_P,\fol|_{U_P})$ is still basic.
\item Since $U_P$ is a union of (open sets of) sections, and the mean curvature vector field $H$ of $\fol_P$ is everywhere horizontal, the flow of $H$ moves leaves of $\fol_P$ onto leaves of $\fol_P$.
\item Since the mean curvature vector field of any regular leaf $L'$ is basic, then $L'$ is minimal if and only if $L'\cap U_P$ is minimal.
\end{enumerate}
By the properties above, $f_{U_P}(t,L\cap U_P)=L_t\cap U_P$ is the solution for the mean curvature flow starting from $L\cap U_P\in \fol|_{U_P}$ if and only if  $f(t,L)=L_t$ is the solution for the mean curvature flow starting at $L\in \fol$. In particular, up to replacing $(M,\fol)$ with $(U_P,\fol|_{U_P})$, we can assume that the leaves have finite volume.
\\

We analyze the integral curves $c_*(t,\cdot)=\pi(f(t,\cdot))$ of the vector field $H_*$ on the manifold part of $M/\fol$ for $t<0$. In particular, studying the behaviour of $f(t,\cdot)$ as $t\to -\infty$ reduces to studying the integral curves of $-H_*$ for positive times. By Proposition 3.3 of \cite{AlexandrinoRadeschi-II}, as $t\to -\infty$ the flow $f(t,\cdot)$ escapes small tubular neighbourhoods of any singular leaf. Thus, there is a tubular neighborhood $U$ of the singular set of $M/\fol$ such that the integral curves of $-H_*$ starting from $(M/\fol)\setminus U$ stay in $(M/\fol)\setminus U$ for all time $t>0$. Since $M/\fol$ was assumed to be compact, $(M/\fol)\setminus U$ is contained in a compact set. Furthermore, the function $V:M/\fol\to \RR$ from Proposition \ref{P:volume-concavity} (resp. $V_P:U_P/\fol|_{U_P}\to \RR$) is a Lyapunov function for the flow of $-H_*$. In particular, the flow has a unique global attractor, that is the projection of the unique minimal regular leaf of $\fol$.
\end{proof}

\begin{remark}\label{R:conc-holds}
The conditions in Proposition  \ref{P:volume-concavity} are easily seen to be satisfied in the following situations:
\begin{itemize}
\item $M$ is compact with $Ric_M>0$ and $(M,\fol)$ is hyperpolar.
\item $M$ is compact with $sec_M>0$ and $(M,\fol)$ is polar.
\end{itemize}
Furthermore, Proposition \ref{P:type3cpt} and Corollary \ref{L:cond-satisf} show that the condition above is satisfied for factors of type 3.
\end{remark}

Recall that, by Theorems 1.18 and 1.20 of \cite{Terng-II}, the leaves of an isoparametric foliation $(\RR^k,\fol)$ without trivial factors must be compact, and contained in concentric spheres. Furthermore restriction of $\fol$ to each sphere $S$ is still isoparametric, and by Theorem 1.1(2) of \cite{TerngLiu-II}  there is a unique regular leaf that is minimal in $S$.
The following proposition is a generalization of this result.

\begin{proposition}[Minimal leaves of polar foliations]\label{P:minl-leaves}
Let $(M,\fol)$ be a polar foliation on a simply connected symmetric space with non-negative curvature, and let $(M,\fol)= (M_{-1},\fol_{-1})\times (M_0,\fol_0) \times \prod_i (M_i,\fol_i)$ its decomposition into factors. Then:
\begin{enumerate}
\item All leaves of $\fol_{-1}$ are minimal.
\item $(M_0,\fol_0)$ has either one or no minimal regular leaves, depending on whether $M_0/\fol_0$  is compact or not.
\item Each of $(M_i,\fol_i)$ has exactly one minimal regular leaf.
\end{enumerate}
\end{proposition}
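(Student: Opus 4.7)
The plan is to treat each of the three factor types in Lytchak's decomposition separately. For the type 1 factor $(M_{-1},\fol_{-1})$, each leaf is a fiber of the projection of $M_{-1}=N\times N'$ onto one factor, hence a totally geodesic slice of a Riemannian product and in particular minimal; so (1) is immediate.

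For each type 3 factor $(M_i,\fol_i)$, I would combine three earlier results: Proposition \ref{P:type3cpt} gives compactness of $M_i$, Corollary \ref{L:cond-satisf} yields $\operatorname{tr}|_{\V_t}R_t>0$ along horizontal geodesics, and Proposition \ref{P:type3-gen-isoparametric} says the mean curvature is basic. With these, Proposition \ref{P:volume-concavity} applies: the volume function $V\colon M_i/\fol_i\to\RR$ is strictly concave on the interior and vanishes on the boundary, yielding a unique interior maximum. A direct computation (differentiating $V$ along the endpoint map $\phi_{tX}$ and using that the basic mean curvature has constant inner product with any parallel normal vector field along a leaf) identifies critical points of $V$ with leaves of vanishing mean curvature, so this maximum is the unique minimal regular leaf.

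For the type 2 factor $(M_0,\fol_0)$, I would first apply Proposition \ref{P:real-splitting} to decompose $(M_0,\fol_0)=(M_0^+,\fol_0^+)\times(\RR^k,\fol_{\RR^k})$, where $\operatorname{tr}R(\cdot,x)x>0$ on horizontal directions of $M_0^+$. Since the mean curvature of a product submanifold is the sum of the mean curvatures of its factors, a regular leaf $L_1\times L_2$ of $\fol_0$ is minimal iff both $L_1,L_2$ are minimal in their respective ambient spaces. If $M_0/\fol_0$ is compact, Corollary \ref{C:compact quotient} forces $\fol_{\RR^k}$ to have a single leaf, so the problem reduces to $\fol_0^+$; Proposition \ref{P:volume-concavity} (or Remark \ref{R:noncompact-case} when $\fol_0^+$-leaves are non-compact) then yields a unique minimal regular leaf via strict concavity of $V$ on the compact leaf space $M_0^+/\fol_0^+$. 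If instead $M_0/\fol_0$ is non-compact, the same corollary says $\fol_{\RR^k}$ has more than one leaf, and by Terng's classification (Theorems 1.18 and 1.20 of \cite{Terng-II}) every regular leaf of a nontrivial isoparametric foliation on $\RR^k$ lies on a concentric sphere and therefore has non-zero mean curvature in $\RR^k$; consequently no regular leaf of $\fol_0$ is minimal.

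The main obstacle I anticipate is the case when $(M_0^+,\fol_0^+)$ has non-compact leaves, since Proposition \ref{P:volume-concavity} is stated for compact leaves. I would handle this via the localization in Remark \ref{R:noncompact-case}: the strict-concavity argument goes through for the volume function $V_P$ on $U_P/\fol|_{U_P}$, which is identified with the interior of the compact leaf space $M_0^+/\fol_0^+$, so that $V_P$ still attains a unique interior maximum which corresponds to the desired minimal regular leaf.
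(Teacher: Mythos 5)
Your proposal is correct and follows essentially the same route as the paper: type 1 is immediate, type 3 is handled by Propositions \ref{P:type3cpt}, \ref{P:type3-gen-isoparametric}, Corollary \ref{L:cond-satisf} and Proposition \ref{P:volume-concavity} (this is exactly Remark \ref{R:conc-holds} in the paper), and type 2 is reduced via the splitting of Proposition \ref{P:real-splitting} and the dichotomy $k=0$ vs.\ $k>0$, invoking Proposition \ref{P:volume-concavity}/Remark \ref{R:noncompact-case} on the positive-trace factor and the non-minimality of compact positive-dimensional submanifolds of $\RR^k$ on the Euclidean factor.
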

\begin{proof}

The first point is obvious. By Proposition \ref{P:real-splitting}, $(M_0,\fol_0)$ splits as a product of hyperpolar foliations $(M_1,\fol_1)\times (\RR^k,\fol_0^{nc})$, where for every $\fol_1$-horizontal direction $x$ in $(M_1,\fol_1)$, $\operatorname{tr}|_{\V}R>0$. In this case, $M_0/\fol_0$ is compact if and only if $k=0$.

If $k=0$ then $M_0=M_1$ has a minimal leaf by Theorem {\ref{T:parallel-H}} and Remark \ref{R:noncompact-case}. In fact, by Theorem {\ref{T:parallel-H}} a leaf $L$ is minimal if and only if for any relatively compact set $P$ of $L$ one had $H|_{L}$, which is equivalent to $P$ being a critical point for the volume functional on $U_P$ (as defined in Remark {\ref{R:noncompact-case}}). By Remark {\ref{R:noncompact-case}}, there exists exactly one such point.

If $k>0$, then it is well known that the leaves of $(\RR^k,\fol_0^{nc})$ are not minimal, and so neither are the leaves of $M_0$.

Finally, point 3) follows from Proposition \ref{P:volume-concavity} and Remark \ref{R:conc-holds} since factors of type 3 have positive Ricci curvature.
\end{proof}

\begin{proof}[Proof of Theorem \ref{T:ancient}]
Given $(M,\fol)$ polar foliation with compact quotient on a simply connected symmetric space with non-negative sectional curvature, it follows by Theorem \ref{T:parallel-H} that $\fol$ is isoparametric. Furthermore, by comparing Proposition \ref{P:real-splitting} and Corollary \ref{C:splitting}, it follows that $\fol$ satisfies $\operatorname{tr}|_{\V}R>0$ at regular points. Thus Theorem~\ref{T:MCF} applies, and the first part of the theorem is proved. The second part about uniqueness of the minimal leaf follows directly from Proposition \ref{P:minl-leaves}
\end{proof}

\end{document}